\newtheorem{theorem}{Theorem}
\newtheorem{lemma}{Lemma}
\newtheorem{definition}{Definition}
\newtheorem{remark}{Remark}
\newtheorem{assumption}{Assumption}
\newcommand{\R}{\mathbb{R}}
\newcommand{\norm}[1]{\left\lVert#1\right\rVert}
\newcommand{\itp}{\hat{\otimes}_\varepsilon}
\title{A deformation-based framework for learning solution mappings of PDEs defined on varying domains}
\author[1]{Shanshan Xiao}
\author[2,3,*]{Pengzhan Jin}
\author[4,5]{Yifa Tang}
\affil[1]{Department of Mathematical Sciences, Tsinghua University, Beijing 100084, China}
\affil[2]{National Engineering Laboratory for Big Data Analysis and Applications, Peking University, Beijing 100871, China}
\affil[3]{Chongqing Research Institute of Big Data, Peking University, Chongqing 401329, China}
\affil[4]{State Key Laboratory of Mathematical Sciences, Academy of Mathematics and Systems Science, Chinese Academy of Sciences, Beijing 100190, China}
\affil[5]{School of Mathematical Sciences, University of Chinese Academy of Sciences, Beijing 100049, China}
\affil[*]{Corresponding author. E-mail: jpz@pku.edu.cn}
\date{}
\begin{document}
	
\maketitle
\begin{abstract}
In this work, we establish a deformation-based framework for learning solution mappings of PDEs defined on varying domains. The union of functions defined on varying domains can be identified as a metric space according to the deformation, then the solution mapping is regarded as a continuous metric-to-metric mapping, and subsequently can be represented by another continuous metric-to-Banach mapping using two different strategies, referred to as the D2D subframework and the D2E subframework, respectively. We point out that such a metric-to-Banach mapping can be learned by neural networks, hence the solution mapping is accordingly learned. With this framework, a rigorous convergence analysis is built for the problem of learning solution mappings of PDEs on varying domains. As the theoretical framework holds based on several pivotal assumptions which need to be verified for a given specific problem, we study the star domains as a typical example, and other situations could be similarly verified. There are four important features of this framework: (1) The domains under consideration are not required to be diffeomorphic, therefore a wide range of regions can be covered by one model provided they are homeomorphic. (2) The deformation mapping is unnecessary to be continuous, thus it can be flexibly established via combining a primary identity mapping and a local deformation mapping. This capability facilitates the resolution of large systems where only local parts of the geometry undergo change. (3) In fact, the recent methods (Geo-FNO, DIMON, etc.) belong to the D2D subframework. We point out that the D2D subframework introduces regularity issues, whereas the proposed D2E subframework remains free from such problems. From a comprehensive perspective, the D2E subframework is better. (4) If a linearity-preserving neural operator such as MIONet is adopted, this framework still preserves the linearity of the surrogate solution mapping on its source term for linear PDEs, thus it can be applied to the hybrid iterative method. We finally present several numerical experiments to validate our theoretical results.
\end{abstract}

\section{Introduction}

Over the past few years, scientific machine learning has garnered significant acclaim for its remarkable achievements in the fields of computational science and engineering \cite{karniadakis2021physics}. Due to the powerful approximation ability of neural networks (NNs) \cite{e2019barron,hanin2019universal,hornik1989multilayer,hornik1990universal,siegel2022high}, a variety of approaches have been put forth to tackle partial differential equations (PDEs). These methods involve parameterizing the solutions through NNs and constructing loss functions based on the strong or variational forms of the PDEs, such as the PINNs \cite{cai2021physics,lu2021physics,pang2019fpinns,raissi2019physics}, the deep Ritz method \cite{yu2018deep}, and the deep Galerkin method \cite{sirignano2018dgm}. A notable drawback of these methods is their slow solving speed, as the NN requires retraining whenever the PDE parameters change. To rapidly obtain solutions for parametric PDEs, several end-to-end approaches known as neural operators have been introduced. These methods aim to directly learn the solution operators of PDEs, bypassing the need for repeated training. The DeepONet \cite{lu2019deeponet,lu2021learning} was firstly proposed in 2019, which employs a branch net and a trunk net to encode the input function and the solution respectively, enabling swift predictions for parametric PDEs. In 2020, the FNO \cite{li2020neural,li2020fourier} was proposed, which learns the solution operators with an integral kernel parameterized in the Fourier domain. Within the same thematic area, numerous studies have emerged \cite{gupta2021multiwavelet,he2023mgno,jin2022mionet,rahman2022u,raonic2023convolutional,wu2023solving}, significantly advancing the field of neural operators. Among the neural operators, the MIONet \cite{jin2022mionet} plays an important role in dealing with complicated cases with multiple inputs, which generalizes the theory and the architecture of the DeepONet. Both the DeepONet and the MIONet are in fact derived from the tensor product of Banach spaces. Furthermore, the trunk nets in the ONets series offer the convenience of differentiating the output functions. This feature allows for the training of neural operators without the need for data, by leveraging the inherent information contained within the PDEs. This approach is being explored in the context of physics-informed DeepONet and MIONet frameworks \cite{wang2021learning,zheng2023state}.

One of the primary limitations of these methods is their specificity in addressing PDEs that are confined to static domains. Given that a multitude of real-world PDE challenges encompass dynamic and diverse domains, there is a pressing requirement to enhance the capacity of neural operators to handle PDE problems across a spectrum of varying regions. In 2022, \cite{goswami2022deep} has leveraged the DeepONet framework to accommodate PDEs characterized by distinct geometric domains. They facilitate the transference of solutions from one domain to another through the mechanism of transfer learning. Nonetheless, this approach still necessitates a retraining phase for the model whenever the domain of the PDE undergoes modification. In the realm of PDEs defined over varying domains, Geo-FNO \cite{li2022fourier} stands out as an innovative extension of the FNO framework. Geo-FNO amplifies the capacity to handle varying domains by employing diffeomorphisms to map the physical space onto a regular computational space. Such a deformation-based strategy is also studied for ONets in the previous version of this work \cite{xiao2024learning} as well as DIMON \cite{yin2024dimon}, while \cite{xiao2024learning} only requires that the domains are homeomorphic and provides related theories. Besides these deformation-based methods, a variety of alternative strategies have been developed to address the challenge of varying domains. For instance, CORAL \cite{serrano2023operator} extracts latent codes for input functions using the auto-decoder \cite{park2019deepsdf} and the Implicit Neural Representations (INR) \cite{sitzmann2020implicit}. Meanwhile, Geom-DeepONet \cite{he2024geom}, 3D-GeoCA \cite{deng2024geometryguided} and PCNO \cite{zeng2025point} leverage the point cloud \cite{qi2017pointnet,qi2017pointnet++} to encode the geometric information. Among these methods, the deformation-based strategy imposes more constraints on the domains. Nonetheless, it is anticipated to perform more effectively when the domains in problem are easily parameterized and undergo only minor changes. While the deformation-based studies concentrate on methodologies and experimental results, there is an urgent need for a comprehensive theoretical analysis and a systematic framework. It is imperative that we establish the correctness and efficacy of this approach through theoretical means.

\textbf{Contributions.} We list the contributions of this work as follows:
\begin{itemize}
    \item We systematically present a deformation-based framework designed to learn solution mappings for PDEs that are defined across a spectrum of varying domains, with a rigorous convergence analysis built, thereby filling a critical theoretical gap in the field.
    \item It does not mandate that the considered domains be diffeomorphic, allowing a single model to encompass a broad array of regions, provided they are homeomorphic, which significantly expands the applicability of the deformation-based learning methods.
    \item The deformation mapping is not required to be continuous, enabling flexible establishment through a combination of a primary identity mapping and localized deformation mappings. This capability facilitates the resolution of large systems where only local parts of the geometry undergo change.
    \item We establish two subframeworks (D2D and D2E), while the recent methods (Geo-FNO, DIMON, etc.) belong to the D2D subframework. We point out that the D2D subframework introduces regularity issues, whereas the proposed D2E subframework remains free from such problems. From a comprehensive perspective, the D2E subframework is better.
    \item If a linearity-preserving neural operator, such as MIONet, is utilized, the framework maintains the linearity of the surrogate solution mapping with respect to its source term for linear PDEs, making it suitable for application in hybrid iterative methods \cite{hu2024hybrid}.
\end{itemize}

This paper is organized as follows. We first briefly introduce the problem setup in Section \ref{sec:setup}. In Section \ref{sec:framework}, we establish the theoretical foundation of our work, and subsequently propose the framework in detail. Section \ref{sec:theoretical_examples} rigorously analyzes the case of the Poisson equation on star domains based on the previous framework. Section \ref{sec:experiment} presents the results of numerical experiments, where we evaluate the methods in several different situations. Finally, we summarize our findings in Section \ref{sec:conclusions}.

\section{Problem setup}\label{sec:setup}

This research originates from the limitations of the early operator learning methods such as DeepONet and FNO which mainly learn the solution operators of PDEs defined on fixed domains. In practical scenarios, PDE problems often involve varying domains.

Basically, the methods we discuss can be applied to both linear and nonlinear PDEs. However, for the sake of clarity and understanding, we consider the following linear problem as
\begin{equation}\label{eq:full_problem_1}
	\begin{cases}
		\mathcal{L}(k^1,k^2,...,k^m)u=f &\quad \mbox{in} \; \Omega,
		\\
		\Gamma u = g &\quad \mbox{on} \; \partial\Omega,
	\end{cases}
\end{equation}
where $\mathcal{L}(k^1,...,k^m)$ is a linear differential operator depending on coefficient functions or constants $k^1,...,k^m$, and $\Gamma$ is a linear boundary operator. The mapping from the inputs to the corresponding solution of such a PDE defined on varying domains can be written as
\begin{equation}
k^1_\Omega\times \cdots \times k^m_\Omega\times f_\Omega\times g_{\partial\Omega}\mapsto u_{\Omega},
\end{equation}
where $k^{i}_\Omega,f_\Omega,u_\Omega$ and $g_{\partial\Omega}$ are functions defined on $\Omega$ and $\partial\Omega$, respectively. 
Taking the Poisson equation as an example:
\begin{equation}\label{eq:poisson_problem}
	\begin{cases}
		-\nabla\cdot(k\nabla u)=f &\quad \mbox{in} \; \Omega,
		\\
		u = g &\quad \mbox{on} \; \partial\Omega,
	\end{cases}	
\end{equation}
the solution mapping is
\begin{equation}\label{eq:full_problem_2}
    k_{\Omega}\times f_{\Omega} \times g_{\partial\Omega} \mapsto u_{\Omega}.
\end{equation}
Note that here $\Omega$ is not fixed, i.e., different tasks provide different $\Omega$. Now we consider the union of functions defined on different domains as $X$, then $X$ will not be a Banach space, so that we cannot directly employ the neural operators to learn this end-to-end map. However, such a space $X$ could be equipped with an appropriate metric that deduces some necessary properties for approximation.

In order to facilitate the readers to understand, here we first consider the simplified case
\begin{equation}
	\begin{cases}
		-\Delta u=f &\quad \mbox{in} \; \Omega,
		\\
		u = 0 &\quad \mbox{on} \; \partial\Omega,
	\end{cases}	
\end{equation}
with the solution mapping
\begin{equation}\label{eq:poisson_mapping}
    f_{\Omega}\mapsto u_{\Omega}.
\end{equation}
The case of (\ref{eq:full_problem_1}-\ref{eq:full_problem_2}) will be of no difficulty as long as this simplified case is solved. The details of the fully-parameterized case are shown in Appendix \ref{sec:mul_in_out} and Section \ref{sec:smooth_star_domain}.

Next we develop the theoretical framework and the method for learning such mappings based on deformation.

\section{A general deformation-based framework}\label{sec:framework}
We establish a general framework with some weaker assumptions compared to the setting of current neural operators which map from Banach spaces to Banach spaces. Firstly, it is necessary to weaken the requirement for the space of input functions. Under the setting of varying domains, two different input functions are usually defined on two different domains, thus the action of sum of these two functions are undefined. Therefore, the input function space is not a linear space, let alone a Banach space. To derive the approximation property, we need the input function space to have at least some kind of metric.

Assume that $U$ is a class of bounded domains in $\R^d$, and let
\begin{equation}
    X:=\bigsqcup_{\Omega\in U}\mathcal{B}(\Omega),\quad \mathcal{B}(\Omega)=\{\rm bounded\ Borel\ functions\ on\ \Omega\}.
\end{equation}
$X$ is exactly the union of the bounded Borel function spaces defined on domains in $U$. We expect to determine the distance between two functions in $X$, especially those with different domains.

\begin{assumption}[metric assumption]\label{ass:metric}
    Suppose that the space $U$ is equipped with a metric $d_{U}$, such that $(U,d_{U})$ is a metric space.
\end{assumption}

\begin{assumption}[deformation assumption]\label{ass:deformation}
    Suppose that there is a standard domain $\Omega_0$ together with corresponding deformation mappings $\mathcal{D}[\Omega]$ for $\Omega$ in $U$, where $\mathcal{D}[\Omega]$ is a bijective between $\Omega_0$ and $\Omega$, $\mathcal{D}[\Omega]\in \mathcal{B}(\Omega_0,\R^d)$, $\mathcal{D}[\Omega]^{-1}\in \mathcal{B}(\Omega,\R^d)$, and the deformation system
\begin{equation}
\begin{split}
    \mathcal{D}:U&\to\mathcal{B}(\Omega_0,\R^d)\subset L^2(\Omega_0,\R^d) \\
    \Omega&\mapsto\mathcal{D}[\Omega]
\end{split}
\end{equation}
is continuous under Assumption \ref{ass:metric}. Note that (i) $\mathcal{D}[\Omega]^{-1}\circ\mathcal{D}[\Omega]$ and $\mathcal{D}[\Omega]\circ\mathcal{D}[\Omega]^{-1}$ are the identity mappings on $\Omega_0$ and $\Omega$ respectively, (ii) we only require the deformation system $\mathcal{D}$ to be continuous, but the deformation mapping $\mathcal{D}[\Omega]$ is allowed to be discontinuous. 
\end{assumption}
\begin{remark}
$\mathcal{D}[\Omega_0]$ usually degenerates to the identity mapping on $\Omega_0$, but this is theoretically not necessary.
\end{remark}

Based on this assumption, the metric for $X$ can be identified.
\begin{definition}[metric for $X$]
\label{def:X_metric}
Under Assumption \ref{ass:metric} and \ref{ass:deformation}, there is a metric $d_X(\cdot,\cdot)$ for $X$ defined as
\begin{equation}
    d_X(f_{\Omega_1},f_{\Omega_2}):=d_{U}(\Omega_1,\Omega_2)+\norm{f_{\Omega_1}\circ\mathcal{D}[\Omega_1]-f_{\Omega_2}\circ\mathcal{D}[\Omega_2]}_{L^2(\Omega_0)},
\end{equation}
where $\Omega_1,\Omega_2\in U$, $f_{\Omega_1}\in\mathcal{B}(\Omega_1)\subset X,f_{\Omega_2}\in\mathcal{B}(\Omega_2)\subset X$.
\end{definition}

To numerically deal with the input functions, we have to discretize such a metric space with the following assumption. Below $\mathbb{N}^*$ denotes the set of natural numbers.

\begin{assumption}[discretization assumption]\label{ass:projection}
Let $X$ be a metric space with metric $d_X(\cdot,\cdot)$, assume that $\kappa:\mathbb{N}^*\to \mathbb{N}^*$ is a strictly monotonically increasing function, $\{\phi_n\}$ and $\{\psi_n\}$ are two sets of mappings, with $\phi_n\in C(X, \R^{\kappa(n)})$, $\psi_n\in C(\phi_n(X), X)$, $P_n:=\psi_n\circ\phi_n\in C(X, X)$, satisfying
\begin{equation}
    \lim_{n\to\infty}\sup_{x\in K}d_X(x,P_n(x))=0,
\end{equation}
for any compact $K\subset X$. We say $\{\phi_n\}$ is a discretization (encoder) for $X$, and $\{\psi_n\}$ is a reconstruction (decoder) for $X$, $P_n$ is the corresponding projection mapping.
\end{assumption}
For a deeper understanding of this assumption, readers may refer to the example of star domains in Section \ref{sec:star_domains} and Figure \ref{fig:encoder}. This assumption is prepared for stating the approximation theorem for learning metric-to-Banach mappings. 
\begin{theorem}[approximation theorem for metric-to-Banach mappings]\label{thm:app_metric_to_banach}
    Let $X$ be a metric space satisfying Assumption \ref{ass:projection} with the projection mapping $P_q=\psi_q\circ\phi_q$ and $\kappa(q)$, $Y$ be a Banach space, $K$ be a compact set in $X$. Suppose that
    \begin{equation}\label{eq:continuous_solution_mapping}
    \mathcal{G}: K\rightarrow Y   
    \end{equation}
    is a continuous mapping, then for any $\epsilon>0$, there exist positive integers $p,q$, a continuous mapping $\tilde{\mathcal{G}}:\R^{\kappa(q)}\to\R^{p}$ and $u\in C(\R^p,Y)$ such that
    \begin{equation}\label{eq:app_general}
    \sup_{v\in K} \norm{\mathcal{G}(v) - u\circ\tilde{\mathcal{G}}(\phi_{q}(v))}_Y< \epsilon.
    \end{equation}
    For convenience, we simplify the notation as
    \begin{equation}
    \begin{split}
        \hat{\mathcal{G}}:K&\to Y \\
        v&\mapsto u\circ\tilde{\mathcal{G}}(\phi_{q}(v)),
    \end{split}
    \end{equation}
    thus Eq. \eqref{eq:app_general} can be rewritten as
    \begin{equation}
        \norm{\mathcal{G}-\hat{\mathcal{G}}}_{C(K,Y)}<\epsilon.
    \end{equation}
\end{theorem}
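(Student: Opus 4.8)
The plan is to factor $\mathcal{G}$ through a finite-dimensional space in two essentially independent reductions: first compress the Banach-valued \emph{output} using compactness of $\mathcal{G}(K)$, and then route the \emph{input} through the encoder $\phi_q$ using the discretization assumption. The map $u$ will carry the $\mathbb{R}^p\to Y$ reconstruction, $\tilde{\mathcal{G}}$ the finite-dimensional processing, and $\phi_q$ the encoding supplied by Assumption \ref{ass:projection}.

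First I would handle the output side via a Schauder projection. Since $K$ is compact and $\mathcal{G}$ continuous, $C:=\mathcal{G}(K)$ is compact in $Y$. For tolerance $\epsilon/2$ I would pick a finite $\epsilon/2$-net $y_1,\dots,y_p\in C$ and form the partition of unity $\lambda_i(y)=\mu_i(y)/\sum_j\mu_j(y)$ with $\mu_i(y)=\max(0,\epsilon/2-\norm{y-y_i}_Y)$, which is well defined on $C$ since each $y$ is within $\epsilon/2$ of some net point. Then $\Pi(y):=\sum_i\lambda_i(y)y_i$ is continuous and satisfies $\norm{y-\Pi(y)}_Y<\epsilon/2$ on $C$. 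Defining the linear reconstruction $u(a):=\sum_i a_i y_i\in C(\mathbb{R}^p,Y)$ and the continuous coefficient map $\mathcal{C}:=(\lambda_1\circ\mathcal{G},\dots,\lambda_p\circ\mathcal{G})\in C(K,\mathbb{R}^p)$, one gets $u\circ\mathcal{C}=\Pi\circ\mathcal{G}$, so $\mathcal{G}$ is approximated by $u\circ\mathcal{C}$ within $\epsilon/2$. It then remains only to realize the real-valued map $\mathcal{C}$ in the form $\tilde{\mathcal{G}}\circ\phi_q$.

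Next I would push $\mathcal{C}$ through the encoder. The obvious candidate $\mathcal{C}\circ\psi_q$ evaluates $\mathcal{C}$ at $\psi_q(\phi_q(v))=P_q(v)$, which in general lies outside $K$ where $\mathcal{C}$ is undefined; so I would first Tietze-extend $\mathcal{C}$ componentwise (legitimate since $X$ is metric, hence normal, and $K$ is compact, hence closed) to $\bar{\mathcal{C}}\in C(X,\mathbb{R}^p)$. The key technical point is a uniform-continuity argument built on the set $A:=K\cup\bigcup_{q}P_q(K)$. Using $\sup_{v\in K}d_X(P_q(v),v)\to 0$, I would show $A$ is relatively compact: a sequence in $A$ with bounded indices reduces to finitely many compact sets $P_q(K)$, while for indices $q_n\to\infty$ the estimate $d_X(P_{q_n}(v_n),v_n)\to 0$ lets a convergent subsequence of $v_n\in K$ drag its images to the same limit. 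Hence $u\circ\bar{\mathcal{C}}$ is uniformly continuous on the compact set $\bar A$, producing $\eta>0$ with $\norm{u(\bar{\mathcal{C}}(x))-u(\bar{\mathcal{C}}(x'))}_Y<\epsilon/2$ whenever $x,x'\in\bar A$ and $d_X(x,x')<\eta$.

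Finally, choosing $q$ with $\sup_{v\in K}d_X(P_q(v),v)<\eta$ by Assumption \ref{ass:projection}, I would define $\tilde{\mathcal{G}}$ on the compact (hence closed) set $\phi_q(K)$ by $z\mapsto\bar{\mathcal{C}}(\psi_q(z))$ and Tietze-extend it to all of $\mathbb{R}^{\kappa(q)}$, giving $\tilde{\mathcal{G}}\in C(\mathbb{R}^{\kappa(q)},\mathbb{R}^p)$ with $\tilde{\mathcal{G}}(\phi_q(v))=\bar{\mathcal{C}}(P_q(v))$ on $K$. The triangle inequality $\norm{\mathcal{G}(v)-u(\tilde{\mathcal{G}}(\phi_q(v)))}_Y\le\norm{\mathcal{G}(v)-u(\mathcal{C}(v))}_Y+\norm{u(\bar{\mathcal{C}}(v))-u(\bar{\mathcal{C}}(P_q(v)))}_Y$ then bounds the first term by the Schauder estimate and the second by the uniform-continuity estimate (as $v,P_q(v)\in\bar A$ with $d_X(v,P_q(v))<\eta$), yielding a total below $\epsilon$. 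I expect the main obstacle to be precisely the third paragraph: $\mathcal{G}$ and $\mathcal{C}$ live only on $K$, yet the discretization naturally evaluates at $P_q(v)\notin K$, which forces the extension step and the relative compactness of $A$ needed to upgrade the pointwise convergence $P_q\to\mathrm{id}$ into a uniform bound after composition.
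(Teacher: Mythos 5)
Your proof is correct, and it reaches the same structural endpoint as the paper — a finite-rank factorization $\sum_j f_j(\cdot)\,u_j$ of $\mathcal{G}$, followed by replacing the evaluation point $v$ with $P_q(v)=\psi_q(\phi_q(v))$ — but it gets there by a genuinely more elementary and more complete route. The paper obtains the finite-rank approximation abstractly from the injective tensor product identity $C(K,Y)=C(K)\itp Y$ and extends the coefficient functions by Dugundji's theorem; you instead build the approximation by hand via an $\epsilon/2$-net in the compact image $\mathcal{G}(K)$ and a Schauder-type partition of unity, and use Tietze extension (which suffices, since the coefficients $\lambda_i\circ\mathcal{G}$ are scalar-valued). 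The substantive added value of your write-up is the third paragraph: the paper simply asserts that Assumption \ref{ass:projection} lets one pass from $\sum_j f_j\cdot u_j$ to $\sum_j f_j(P_q(\cdot))\cdot u_j$, but since $P_q(v)$ generally leaves $K$, this step actually requires uniform continuity of the (extended) coefficient maps on a compact set containing both $K$ and all the $P_q(K)$. Your argument that $K\cup\bigcup_q P_q(K)$ is relatively compact — splitting into bounded indices (finitely many compact images) and $q_n\to\infty$ (where $d_X(P_{q_n}(v_n),v_n)\to 0$ drags images along a convergent subsequence of $v_n$) — is exactly the missing justification, and your Tietze extension of $\tilde{\mathcal{G}}$ from the compact set $\phi_q(K)$ to all of $\R^{\kappa(q)}$ likewise repairs a point the paper leaves informal (its $\tilde{\mathcal{G}}=(f_1\circ\psi_q,\dots,f_p\circ\psi_q)$ is a priori defined only on $\phi_q(X)$). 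In short: same skeleton, but your version is self-contained where the paper delegates to tensor-product machinery, and it fills in the uniformity argument the paper elides.
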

\begin{proof}
The proof can be found in Appendix \ref{sec:proof_app_metric_to_banach}.
\end{proof}
According to two different strategies for representing solution mappings of PDEs as metric-to-Banach mappings, there are two subframeworks.

\subsection{Representation based on deformed solution space}\label{sec:rep_deformed}

Return to the problem of varying domains. We aim to learn a continuous mapping
\begin{equation}
\begin{split}
    X&\\
    \cup& \\
    \mathcal{H}: K&\longrightarrow  X=\bigsqcup_{\Omega\in U}\mathcal{B}(\Omega),\\
    f_{\Omega}&\longmapsto u_{\Omega}
\end{split}
\end{equation}
where $K\subset X$ is compact, and $\mathcal{H}$ preserves the domain, i.e.,
\begin{equation}\label{eq:preserve_domain}
    \mathcal{H}(\mathcal{B}(\Omega))\subset\mathcal{B}(\Omega),\quad \forall\Omega\in U.
\end{equation}
In accordance with Assumption \ref{ass:deformation} and Definition \ref{def:X_metric}, any function in $X$ can be represented as a domain and a reference function on the standard domain $\Omega_0$, as
\begin{equation}\label{eq:sigma}
\begin{split}
    \sigma:X&\longrightarrow U\times \mathcal{B}(\Omega_0), \\
    f_\Omega&\longmapsto (\Omega, f_\Omega\circ\mathcal{D}[\Omega])
\end{split}
\end{equation}
here $\sigma$ is an isometry. With the following two projections
\begin{equation}\label{eq:projections}
\begin{split}
    \pi_{1}:  U \times  \mathcal{B}(\Omega_{0}) & \rightarrow U,\quad\quad \pi_{2}:  U \times  \mathcal{B}(\Omega_{0}) \rightarrow \mathcal{B}(\Omega_{0}), \\
             (\Omega, f)  & \mapsto \Omega\quad\quad\quad\quad\quad\quad\  (\Omega, f)\mapsto f  \\
\end{split}
\end{equation}
$\mathcal{H}$ can be rewritten as
\begin{equation}
\mathcal{H}=\sigma^{-1}\circ(\pi_1\circ\sigma,\hat{\mathcal{H}}),
\end{equation}
by Eq. \eqref{eq:preserve_domain}, where
\begin{equation}
    \hat{\mathcal{H}}:=\pi_2\circ\sigma\circ\mathcal{H}.
\end{equation}
The problem has now been transformed into learning a metric-to-Banach mapping
\begin{equation}
\begin{split}
    \hat{\mathcal{H}}:K&\longrightarrow\mathcal{B}(\Omega_0)\subset L^2(\Omega_0), \\
    f_\Omega&\longmapsto\pi_2(\sigma(\mathcal{H}(f_\Omega)))=u_\Omega\circ\mathcal{D}[\Omega]
\end{split}
\end{equation}
which is addressed according to Theorem \ref{thm:app_metric_to_banach}. We summarize above analysis as follows.
\begin{theorem}[representation formula based on the deformed solution space]\label{thm:representation_deformed}
    Assume that $(X=\bigsqcup_{\Omega\in U}\mathcal{B}(\Omega),d_X)$ is the metric space defined in Definition \ref{def:X_metric}, $K\subset X$ is a compact set, then for any continuous mapping
    \begin{equation}
        \mathcal{H}:K\longrightarrow X,
    \end{equation}
    satisfying $\mathcal{H}(\mathcal{B}(\Omega))\subset\mathcal{B}(\Omega)$ for $\Omega\in U$, there exists a unique continuous mapping
\begin{equation}
\begin{split}
    \hat{\mathcal{H}}:K&\longrightarrow L^2(\Omega_0), \\
    f_\Omega&\longmapsto\pi_2(\sigma(\mathcal{H}(f_\Omega)))
\end{split}
\end{equation}
    such that
    \begin{equation}
        \mathcal{H}=\sigma^{-1}\circ(\pi_1\circ\sigma,\hat{\mathcal{H}}).
    \end{equation}
\end{theorem}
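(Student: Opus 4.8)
The plan is to exhibit $\hat{\mathcal{H}}$ explicitly as $\hat{\mathcal{H}}:=\pi_2\circ\sigma\circ\mathcal{H}$, verify its continuity, and then check the factorization identity and uniqueness directly. The whole argument rests on two structural facts: that $\sigma$ in Eq.~\eqref{eq:sigma} is an isometric bijection (hence a homeomorphism onto $U\times\mathcal{B}(\Omega_0)$ equipped with the sum metric $d_U+\norm{\cdot}_{L^2(\Omega_0)}$), and that the domain-preserving hypothesis $\mathcal{H}(\mathcal{B}(\Omega))\subset\mathcal{B}(\Omega)$ forces $\mathcal{H}$ and the identity to agree on the $U$-component after passing through $\sigma$.

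First I would confirm that $\sigma$ is a bijection. Injectivity and distance preservation are immediate from Definition~\ref{def:X_metric}, since for $f_{\Omega_1},f_{\Omega_2}\in X$ the product-metric distance between $\sigma(f_{\Omega_1})$ and $\sigma(f_{\Omega_2})$ equals $d_X(f_{\Omega_1},f_{\Omega_2})$ by construction. For surjectivity, given any pair $(\Omega,g)\in U\times\mathcal{B}(\Omega_0)$, the function $g\circ\mathcal{D}[\Omega]^{-1}$ lies in $\mathcal{B}(\Omega)$ because $\mathcal{D}[\Omega]^{-1}\in\mathcal{B}(\Omega,\R^d)$ by Assumption~\ref{ass:deformation} (a bounded Borel function composed with a Borel map is again bounded Borel), and $\sigma(g\circ\mathcal{D}[\Omega]^{-1})=(\Omega,g)$ using $\mathcal{D}[\Omega]\circ\mathcal{D}[\Omega]^{-1}=\mathrm{id}_\Omega$. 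Being a surjective isometry, $\sigma$ is a homeomorphism with continuous inverse, and the coordinate projections $\pi_1,\pi_2$ are $1$-Lipschitz for the sum metric, hence continuous.

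Next I would define $\hat{\mathcal{H}}:=\pi_2\circ\sigma\circ\mathcal{H}$, which maps $K$ into $\mathcal{B}(\Omega_0)\subset L^2(\Omega_0)$ and is continuous as a composition of the continuous maps $\mathcal{H}$, $\sigma$, and $\pi_2$; tracing the definitions gives $\hat{\mathcal{H}}(f_\Omega)=u_\Omega\circ\mathcal{D}[\Omega]$ with $u_\Omega=\mathcal{H}(f_\Omega)$, matching the stated formula. The key step, and the only place the domain-preserving hypothesis enters, is the factorization. For $f_\Omega\in K$ with $f_\Omega\in\mathcal{B}(\Omega)$, the hypothesis gives $\mathcal{H}(f_\Omega)\in\mathcal{B}(\Omega)$, so $\pi_1(\sigma(\mathcal{H}(f_\Omega)))=\Omega=\pi_1(\sigma(f_\Omega))$; that is, $\pi_1\circ\sigma\circ\mathcal{H}=\pi_1\circ\sigma$ on $K$. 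Therefore $(\pi_1\circ\sigma,\hat{\mathcal{H}})(f_\Omega)=(\Omega,u_\Omega\circ\mathcal{D}[\Omega])=\sigma(\mathcal{H}(f_\Omega))$, and applying $\sigma^{-1}$ recovers $\mathcal{H}(f_\Omega)$, establishing $\mathcal{H}=\sigma^{-1}\circ(\pi_1\circ\sigma,\hat{\mathcal{H}})$.

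Finally, uniqueness follows by composing the factorization identity on the left with $\pi_2\circ\sigma$: any admissible $\hat{\mathcal{H}}'$ must satisfy $\pi_2\circ\sigma\circ\mathcal{H}=\pi_2\circ(\pi_1\circ\sigma,\hat{\mathcal{H}}')=\hat{\mathcal{H}}'$, so $\hat{\mathcal{H}}'=\pi_2\circ\sigma\circ\mathcal{H}=\hat{\mathcal{H}}$. I do not expect a serious obstacle: the content is essentially bookkeeping around the isometry $\sigma$, and the one genuinely load-bearing hypothesis is that $\mathcal{H}$ preserves domains, without which $\pi_1\circ\sigma\circ\mathcal{H}$ need not equal $\pi_1\circ\sigma$ and the factorization would break down.
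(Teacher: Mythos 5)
Your proof is correct and follows exactly the construction the paper itself lays out in the discussion preceding the theorem ($\hat{\mathcal{H}}:=\pi_2\circ\sigma\circ\mathcal{H}$, with the domain-preserving hypothesis yielding $\pi_1\circ\sigma\circ\mathcal{H}=\pi_1\circ\sigma$ and uniqueness by post-composing with $\pi_2$); the paper's formal proof is just ``It is easy to check,'' so you have simply written out the details it omits.
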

\begin{proof}
    It is easy to check.
\end{proof}

\subsection{Representation based on extended solution space}\label{sec:rep_extended}
Recall that we expect to learn a continuous mapping
\begin{equation}
\begin{split}
    X&\\
    \cup& \\
    \mathcal{H}: K&\longrightarrow  X=\bigsqcup_{\Omega\in U}\mathcal{B}(\Omega),\\
    f_{\Omega}&\longmapsto u_{\Omega}
\end{split}
\end{equation}
where $K\subset X$ is compact, and $\mathcal{H}(\mathcal{B}(\Omega))\subset\mathcal{B}(\Omega)$ for $\Omega\in U$. As the previous representation strategy in fact learns the mapping $f_\Omega\mapsto u_\Omega\circ\mathcal{D}[\Omega]$, now we consider the strategy of learning $f_\Omega\mapsto u_\Omega$ directly. Denote the projection mapping from $X$ onto $U$ by
\begin{equation}
\begin{split}
    \pi_U:X&\longrightarrow U.\\
    f_\Omega&\longmapsto \Omega
\end{split}
\end{equation}
Suppose that all the domains in $U$ are covered by a $V=[-R,R]^d$ large enough, we can extend the functions $f_\Omega:\Omega\to\R$ to $f_\Omega^V:V\to\R$ with $f_\Omega^V(x)=f_\Omega(x)\ \forall x\in\Omega$, and $f_\Omega^V(x)=0\ \forall x\in V\backslash\Omega$. So that we have defined an extension mapping
\begin{equation}
\begin{split}
    \mathcal{E}:X&\longrightarrow L^2(V).\\
    f_\Omega&\longmapsto f_\Omega^V
\end{split}
\end{equation}
In addition, we define a restriction mapping
\begin{equation}
\begin{split}
    \mathcal{R}:U\times L^2(V)&\longrightarrow X.\\
    (\Omega,f)&\longmapsto f|_{\Omega}
\end{split}
\end{equation}
One can check that $\pi_U$, $\mathcal{E}$ and $\mathcal{R}$ are all continuous (see step 1. in proof of Theorem \ref{thm:continuity_poisson}).
\begin{theorem}[representation formula based on the extended solution space]\label{thm:representation_extended}
    Assume that $(X=\bigsqcup_{\Omega\in U}\mathcal{B}(\Omega),d_X)$ is the metric space defined in Definition \ref{def:X_metric}, $K\subset X$ is a compact set, then for any continuous mapping
    \begin{equation}
        \mathcal{H}:K\longrightarrow X,
    \end{equation}
    satisfying $\mathcal{H}(\mathcal{B}(\Omega))\subset\mathcal{B}(\Omega)$ for $\Omega\in U$, there exists a continuous mapping
    \begin{equation}
        \hat{\mathcal{H}}:K\longrightarrow L^2(V),
    \end{equation}
    such that
    \begin{equation}
        \mathcal{H}=\mathcal{R}\circ(\pi_U,\hat{\mathcal{H}}).
    \end{equation}
\end{theorem}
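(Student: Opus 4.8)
The plan is to construct $\hat{\mathcal{H}}$ explicitly as the composition $\mathcal{E}\circ\mathcal{H}$, and then verify that this choice satisfies the required factorization $\mathcal{H}=\mathcal{R}\circ(\pi_U,\hat{\mathcal{H}})$. The intuition is straightforward: the extension mapping $\mathcal{E}$ sends a solution $u_\Omega$ defined on $\Omega$ to its zero-extension $u_\Omega^V$ on the big box $V=[-R,R]^d$, landing in the Banach space $L^2(V)$; this is exactly the kind of target we want for a metric-to-Banach mapping amenable to Theorem \ref{thm:app_metric_to_banach}. So first I would set $\hat{\mathcal{H}}:=\mathcal{E}\circ\mathcal{H}$, which maps $K$ into $L^2(V)$.

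Next I would check that this $\hat{\mathcal{H}}$ is continuous. Since $\mathcal{H}:K\to X$ is continuous by hypothesis and $\mathcal{E}:X\to L^2(V)$ is continuous (as asserted in the excerpt, with the verification deferred to the proof of Theorem \ref{thm:continuity_poisson}), the composition $\mathcal{E}\circ\mathcal{H}$ is continuous. This step is essentially immediate given the continuity of $\mathcal{E}$, which I would take as granted per the remark preceding the theorem statement.

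The core of the argument is then the identity $\mathcal{H}=\mathcal{R}\circ(\pi_U,\hat{\mathcal{H}})$. I would verify this pointwise: fix $f_\Omega\in K$ and write $u_\Omega:=\mathcal{H}(f_\Omega)\in\mathcal{B}(\Omega)$, where the domain-preservation hypothesis $\mathcal{H}(\mathcal{B}(\Omega))\subset\mathcal{B}(\Omega)$ guarantees that $u_\Omega$ indeed lives on the same domain $\Omega$ as the input. Then $\pi_U(f_\Omega)=\Omega$ and $\hat{\mathcal{H}}(f_\Omega)=\mathcal{E}(u_\Omega)=u_\Omega^V$. Applying $\mathcal{R}$ to the pair $(\Omega,u_\Omega^V)$ gives the restriction $u_\Omega^V|_\Omega$. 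Since the zero-extension agrees with $u_\Omega$ on $\Omega$ by construction ($u_\Omega^V(x)=u_\Omega(x)$ for all $x\in\Omega$), we recover $u_\Omega^V|_\Omega=u_\Omega=\mathcal{H}(f_\Omega)$, which is the desired identity. The two operations $\mathcal{E}$ (extend by zero) and $\mathcal{R}(\Omega,\cdot)$ (restrict to $\Omega$) are mutually inverse on the relevant classes of functions, which is what makes the round trip collapse to the identity.

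The main obstacle, if any, is purely bookkeeping around the domain-preservation condition: the restriction step $\mathcal{R}(\Omega,\cdot)$ must be applied with the correct domain $\Omega=\pi_U(f_\Omega)$, and it is essential that $\mathcal{H}$ does not move the input off its domain, since otherwise the pair $(\pi_U(f_\Omega),\hat{\mathcal{H}}(f_\Omega))$ would restrict to the wrong domain. This is precisely why the hypothesis $\mathcal{H}(\mathcal{B}(\Omega))\subset\mathcal{B}(\Omega)$ is indispensable. Unlike the D2D representation of Theorem \ref{thm:representation_deformed}, no uniqueness claim is made here (different extensions outside $\Omega$ would yield the same restriction), so I would not attempt to prove uniqueness; the statement only asserts existence, and the explicit construction $\hat{\mathcal{H}}=\mathcal{E}\circ\mathcal{H}$ furnishes it.
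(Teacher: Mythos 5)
Your construction $\hat{\mathcal{H}}:=\mathcal{E}\circ\mathcal{H}$ is exactly the choice the paper makes, and your continuity and pointwise-factorization checks correctly flesh out what the paper leaves implicit. The proposal is correct and takes essentially the same approach as the paper's proof.
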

\begin{proof}
$\hat{\mathcal{H}}:=\mathcal{E}\circ\mathcal{H}$ is a feasible choice. Note that in this theorem the choice of $\hat{\mathcal{H}}$ is not unique.
\end{proof}

\subsection{D2D and D2E subframeworks}

With above results, we show the two subframeworks D2D and D2E for learning the solution mappings of PDE problems defined on varying domains. Here D2D means ``Deformation-based discretization to Deformed solution space'', while D2E means ``Deformation-based discretization to Extended solution space'', corresponding to Section \ref{sec:rep_deformed} and Section \ref{sec:rep_extended}, respectively.

1. Find a suitable standard domain $\Omega_0$ for the domains in $U$, and establish the corresponding deformation system $\mathcal{D}$.

2. Find a suitable discretization (encoder) $\{\phi_n\}$ for $X$. The matched reconstruction (decoder) will be hidden in the neural network and does not need to be explicitly written out.

3. Choose a kind of neural network architecture as the approximator $\tilde{\mathcal{H}}_{\theta}:\R^{\kappa(q)}\to\R^{p}$ together with a 
\begin{equation}
u\in
	\begin{cases}
		C(\R^p,L^2(\Omega_0)), &\quad ({\rm D2D})
		\\
		C(\R^p,L^2(V)), &\quad ({\rm D2E})
	\end{cases}
\end{equation}
as in Theorem \ref{thm:app_metric_to_banach}, and the mapping $u$ may also depend on some trainable parameters such as the trunk net in DeepONet/MIONet. The metric-to-Banach mapping is
\begin{equation}
\hat{\mathcal{H}}_\theta(f_\Omega):=u\circ\tilde{\mathcal{H}}_{\theta}(\phi_{q}(f_\Omega)).
\end{equation}

4. Training stage: Given a dataset $\{f_{\Omega_i},u_{\Omega_i}\}_{i=1}^N$, set the loss function to
\begin{equation}\label{eq:train}
L(\theta)=
	\begin{cases}
		\frac{1}{N}\sum_{i=1}^N\norm{\hat{\mathcal{H}}_\theta(f_{\Omega_i})-u_{\Omega_i}\circ\mathcal{D}[\Omega_i]}_{L^2(\Omega_0)}^2. &\quad ({\rm D2D})
		\\
		\frac{1}{N}\sum_{i=1}^N\norm{\hat{\mathcal{H}}_\theta(f_{\Omega_i})-u_{\Omega_i}}_{L^2(\Omega_i)}^2. &\quad ({\rm D2E})
	\end{cases}
\end{equation}
Optimize $L(\theta)$ and obtain $\theta^*=\arg\min_{\theta}L(\theta)$.

5. Predicting stage: Given a new $f_\Omega$, predict the solution by
\begin{equation}\label{eq:predict}
u_{\Omega}^{\rm pred}=
	\begin{cases}
		\hat{\mathcal{H}}_{\theta^*}(f_{\Omega})\circ\mathcal{D}[\Omega]^{-1}. &\quad ({\rm D2D})
		\\
		\hat{\mathcal{H}}_{\theta^*}(f_{\Omega})|_\Omega. &\quad ({\rm D2E})
	\end{cases}
\end{equation}

Briefly, the primary distinction between D2D and D2E lies in the target spaces: D2D maps to the deformed solution space $L^2(\Omega_0)$, whereas D2E maps to the extended solution space $L^2(V)$.

\textbf{Pros and cons.} One defect of D2D is that the regularity of the predicted solution $u_{\Omega}^{\rm pred}=\hat{\mathcal{H}}_{\theta^*}(f_{\Omega})\circ\mathcal{D}[\Omega]^{-1}$ will be influenced by the deformation mapping $\mathcal{D}[\Omega]$. If $\mathcal{D}[\Omega]^{-1}$ is non-differentiable or discontinuous on a subset $\Gamma\subset\Omega$, then the regularity of $u_{\Omega}^{\rm pred}$ on $\Gamma$ may be worse than $u_\Omega$. Fortunately, D2E subframework overcomes this shortcoming, since the predicted solution is directly expressed by the neural network without involving $\mathcal{D}[\Omega]$. This point will be shown in the experiment part. However, D2E incurs marginally higher computational costs than D2D regarding to both memory and speed, because we have to store the integration points for each $\Omega_i$ during training, which also results in more calculations. From a comprehensive perspective, the D2E subframework is still better.

\textbf{Additional contents.} For clarity, the above discussion is presented for single-input\&output case. The multiple-input\&output case, while conceptually identical, is detailed in Appendix \ref{sec:mul_in_out}. Furthermore, we provide details regarding the possible choices of neural operators adopted in this framework, i.e. MIONet and FNO, in Appendix \ref{sec:choices_no}.

\subsection{A summary of framework}
We summarize the theoretical framework presented.
\begin{enumerate}
    \item Verify that $X:=\bigsqcup_{\Omega\in U}\mathcal{B}(\Omega)$ is a discretable metric space according to deformation.
    \item Verify that the metric-to-metric solution mapping $\mathcal{H}:X\supset K\to X$ of a specific PDE problem is indeed continuous.
    \item Represent $\mathcal{H}$ by a continuous metric-to-Banach mapping $\hat{\mathcal{H}}:K\to L^2(\Omega_0)$ (D2D) or $\hat{\mathcal{H}}:K\to L^2(V)$ (D2E).
    \item Approximate $\hat{\mathcal{H}}$ via the neural network $\hat{\mathcal{H}}_\theta$, thus $\mathcal{H}$ has been learned.
\end{enumerate}
Whether the first two steps are valid depends on the specific shapes of the domains in $U$ as well as the PDE problem. We will show some typical examples under this framework.

\section{Theoretical examples}\label{sec:theoretical_examples}

\subsection{Example of star domains}\label{sec:star_domains}
The above theoretical framework is established based on several pivotal assumptions, including the definition of metric on $U$, the related deformation system, as well as the way of discretization. Given a specific problem, we need to provide relevant definitions and verify that they meet the aforementioned assumptions. The first example focuses on the star domains, and it clearly illustrates how this framework works.

In this example, we discuss the simplified star domains in $\R^d$, which are required to take the centroid as the reference point.
\begin{definition}\label{def:star_domain}
    We define the \textbf{star domains} as follows: Consider the centroid of an open set $\Omega\subset\R^d$ as the original point, then we refer to $\Omega$ as a star domain if its boundary $\partial\Omega$ can be expressed as a Lipschitz continuous function
    \begin{equation}
        b:S\to\R^{+},\quad S:=\{e\in\R^d|\norm{e}_2=1\}.
    \end{equation}
    In such a case $\Omega=\{0\neq x\in\R^d|\norm{x}_2<b(x/\norm{x}_2)\}\cup\{0\}$.
\end{definition}
\noindent Now we consider two spaces. Denote
\begin{equation}
    U:=\{ \Omega\subset \R^d ~|~ \Omega ~ \textrm{is a star domain with a Lipschitz coefficient defined above no more than $L$}\}
\end{equation}
and
\begin{equation}
    X:=\bigsqcup_{\Omega\in U}\mathcal{B}\left(\Omega\right).
\end{equation}
We will provide metrics on these two spaces, a deformation system for $U$, as well as a discretization for $X$.

\textbf{Metric for $U$.} Denote by $c_\Omega$ and $b_\Omega$, the centroid of $\Omega$ and the boundary function defined in Definition \ref{def:star_domain} of $\Omega$, respectively. Let $\Omega_1,\Omega_2\in U$, then we define 
\begin{equation}
    d_{U}(\Omega_1,\Omega_2) := d_{E}(c_{\Omega_1}, c_{\Omega_2}) + \sup_{e\in S}| b_{\Omega_{1}}(e) -  b_{\Omega_{2}}(e)|,
\end{equation}
where $d_E$ is the Euclidean metric. One can readily check that $(U,d_U)$ is a metric space.

\textbf{Deformation system for $U$.} We choose the unit open ball $B(0,1)$ as the standard domain $\Omega_0$. For any $\Omega\in U$, the deformation mapping $\mathcal{D}[\Omega]$ is defined as
\begin{equation}\label{eq:def_star}
    \mathcal{D}[\Omega](x):=c_\Omega+b_\Omega(x/\norm{x}_2)\cdot x,\quad x\in \Omega_0=B(0,1).
\end{equation}
It is obvious that $\mathcal{D}[\Omega]$ is a bounded Borel bijective between $\Omega_0$ and $\Omega$, and the deformation system $\mathcal{D}$ is continuous w.r.t. $\Omega$ due to
\begin{equation}
\begin{split}
    \norm{\mathcal{D}[\Omega_1]-\mathcal{D}[\Omega_2]}_{L^2(\Omega_0,\R^d)}=&\left(\int_{\Omega_0}\norm{(c_{\Omega_1}-c_{\Omega_2})+(b_{\Omega_1}(x/\norm{x}_2)-b_{\Omega_2}(x/\norm{x}_2))\cdot x}_2^2dx\right)^{\frac{1}{2}} \\
    \leq&d_U(\Omega_1,\Omega_2)\cdot\left(\int_{\Omega_0}(1+\norm{x}_2)^2dx\right)^{\frac{1}{2}}.
\end{split}
\end{equation}

\textbf{Discretization for $X$.} As aforementioned, the metric for $X$ is
\begin{equation}
    d_X(f_{\Omega_1},f_{\Omega_2})=d_{U}(\Omega_1,\Omega_2)+\norm{f_{\Omega_1}\circ\mathcal{D}[\Omega_1]-f_{\Omega_2}\circ\mathcal{D}[\Omega_2]}_{L^2(\Omega_0)}.
\end{equation}
Reasonably, we discretize $U$ and $\mathcal{B}(\Omega_0)\subset L^2(\Omega_0)$ separately and finally merge them.
\begin{itemize}
    \item For $U$, there are many strategies of discretization, here we consider a natural approach as
    \begin{equation}\label{eq:discretization_U}
        \phi_n^1(\Omega):=\left(c_\Omega,b_\Omega(e_1^n),b_\Omega(e_2^n),...,b_\Omega(e_n^n)\right)\in\R^{n+d},
    \end{equation}
    where $V_n:=\{e_1^n,...,e_n^n\}\subset S$ are expected to be evenly selected. Let ${\rm conv}(V_n)$ be the convex hull of $V_n$, we decompose the non-simplex $(d-1)$-dimensional polytopes on the boundary of ${\rm conv}(V_n)$ into some small simplices, and refer the ${\rm conv}(V_n)$ with piece-wise simplex boundary as $V_n^s$. Denote by $M_n$ the maximum diameter of the simplices on the boundary of $V_n^s$, then $\{e_j^i\}$ are selected such that
    \begin{equation}\label{eq:lim_Mn}
        \lim_{n\to\infty}M_n=0.
    \end{equation}
    
    The related reconstruction $\psi_n^1$ can be established by mapping $\phi_n^1(\Omega)$ to the polytope which takes
    \begin{equation}
        \{c_\Omega+b_\Omega(e_i^n)\cdot e_i^n\}_{i=1}^n
    \end{equation}
    as its vertexes and has the same simplex partition of the boundary as $V_n^s$. For the 2-d case, we can simply choose
    \begin{equation}
        e_i^n=(\cos(2\pi i/n),\sin(2\pi i/n)),\quad 1\leq i\leq n,
    \end{equation}
    and for the 3-d case, we may adopt the Fibonacci lattice \cite{swinbank2006fibonacci,gonzalez2010measurement} as
    \begin{equation}
        e_i^n=\left(\sqrt{1-z_{n,i}^2}\cos((\sqrt{5}-1)\pi i),\sqrt{1-z_{n,i}^2}\sin((\sqrt{5}-1)\pi i),z_{n,i}\right),\quad 1\leq i\leq n,
    \end{equation}
    where $z_{n,i}=1-(2i-2)/(n-1)$. In practice, the $c_\Omega$ in Eq. \eqref{eq:discretization_U} are usually omitted since the PDEs are translation independent and we can move all the centroids to the original point. In such a simplified case,
    \begin{equation}
        \phi_n^1(\Omega)=\left(b_\Omega(e_1^n),b_\Omega(e_2^n),...,b_\Omega(e_n^n)\right)\in\R^{n}.
    \end{equation}
    To provide intuition for the discretization and the reconstruction, we show an illustration in Figure \ref{fig:encoder}. For clarity, the illustration employs direct coordinate encoding, fundamentally equivalent to the above radius-based encoding.

    \begin{figure}[htbp]
    \centering
    \includegraphics[width=0.99\textwidth]{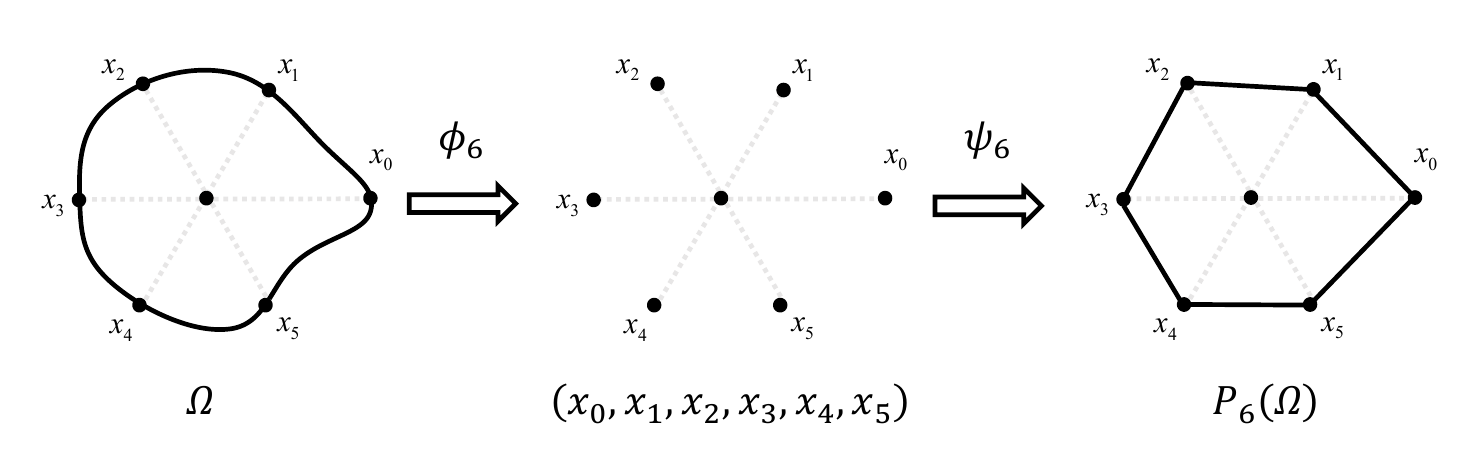}
    \caption{An illustration of the discretization mapping (encoder) and the reconstruction mapping (decoder) for the star domains.}
    \label{fig:encoder}
    \end{figure}
    
    \item For $\mathcal{B}(\Omega_0)$, we first divide $\Omega_0$ into $n$ mutually disjointed subdomains as
    \begin{equation}
        \Omega_0=\bigsqcup_{i=1}^n\Omega_i^n,
    \end{equation}
    and denote by $M_n$ the maximum diameter of these subdomains. Additionally, we ensure that
    \begin{equation}
        \lim_{n\to\infty}M_n=0.
    \end{equation}
    Then the discretization is defined as
    \begin{equation}
        \phi_n^2(f):=\left(\frac{\int_{\Omega_1^n}f(x)dx}{m(\Omega_1^n)},\frac{\int_{\Omega_2^n}f(x)dx}{m(\Omega_2^n)},...,\frac{\int_{\Omega_n^n}f(x)dx}{m(\Omega_n^n)}\right)\in\R^n,\quad f\in\mathcal{B}(\Omega_0),
    \end{equation}
    where $m(\cdot)$ means the Lebesgue measure.
    The related reconstruction $\psi_n^2$ can be established by mapping $\phi_n^2(f)$ to the piece-wise constant function with a value of $\int_{\Omega_i^n}f(x)dx/m(\Omega_i^n)$ over $\Omega_i^n$, $1\leq i\leq n$. If $f$ is known as a piece-wise continuous function, then we can simply apply
    \begin{equation}
        \phi_n^2(f)=\left(f(x_1^n),f(x_2^n),...,f(x_n^n)\right)\in\R^n,
    \end{equation}
    with fixed points $x_i^n\in\Omega_i^n$. In practice, we may choose Fibonacci lattice (2-d and 3-d), or randomly sample $n$ uniformly distributed points in $\Omega_0$ for convenience. 
    \item Finally for $X$, we define the discretization as
    \begin{equation}
        \phi_n(f_\Omega):=\left(\phi_{\kappa_1(n)}^1(\Omega),\phi_{\kappa_2(n)}^2(f_\Omega\circ\mathcal{D}[\Omega])\right)\in\R^{\kappa_1(n)+\kappa_2(n)},\quad f_\Omega\in\mathcal{B}(\Omega),
    \end{equation}
    where $\kappa_1,\kappa_2:\mathbb{N}^*\to \mathbb{N}^*$ can be any strictly monotonically increasing function. The related reconstruction $\psi_n$ is established as
    \begin{equation}
        \psi_n(\phi_n(f_\Omega)):=\sigma^{-1}\left(\psi_{\kappa_1(n)}^1\left(\phi_{\kappa_1(n)}^1(\Omega)\right),\psi_{\kappa_2(n)}^2\left(\phi_{\kappa_2(n)}^2(f_\Omega\circ\mathcal{D}[\Omega])\right)\right),
    \end{equation}
    here $\sigma$ is defined in \eqref{eq:sigma}. In this case $\kappa(n):=\kappa_1(n)+\kappa_2(n)$.
\end{itemize}
Although the discretization for $X$ has been given, we still have to show that such a discretization satisfies Assumption \ref{ass:projection}. Similarly, we firstly verify that the discretizations for $U$ and $\mathcal{B}(\Omega_0)$ satisfy Assumption \ref{ass:projection}, then derive the result for $X$.
\begin{lemma}\label{lem:discretization_for_U}
   The metric space $(U, d_{U})$ and the two sets of mappings $\{\phi_{n}^1\}$ and $\{\psi_{n}^1\}$ defined above satisfy Assumption \ref{ass:projection}, i.e.,
\begin{equation}
 \lim_{n\to\infty}\sup_{\Omega\in K}d_{U}(\Omega, P_{n}^1(\Omega)) = 0
\end{equation}
holds for any compact $K\subset U$, where $P_{n}^1 = \psi_{n}^1\circ\phi_{n}^1$ is the projection mapping.
\end{lemma}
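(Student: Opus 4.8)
Fix a compact set $K\subset U$. The assignment $\Omega\mapsto(c_\Omega,b_\Omega)$ is an isometry of $(U,d_U)$ into $\R^d\times C(S)$ equipped with the sum of the Euclidean and the sup metric, since $d_U(\Omega_1,\Omega_2)=d_E(c_{\Omega_1},c_{\Omega_2})+\norm{b_{\Omega_1}-b_{\Omega_2}}_\infty$. Hence the image of $K$ is compact, and because $\Omega\mapsto\min_{e}b_\Omega(e)$ and $\Omega\mapsto\norm{b_\Omega}_\infty$ are continuous and the first is strictly positive, there are constants
\begin{equation}
0<r_0\le\min_{e\in S}b_\Omega(e)\le\norm{b_\Omega}_\infty\le R_0<\infty,\qquad\norm{c_\Omega}_2\le C_0\qquad(\forall\,\Omega\in K).
\end{equation}
In addition every $b_\Omega$ is $L$-Lipschitz by definition of $U$. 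These uniform bounds are the only place where compactness of $K$ is used; everything below is uniform in $\Omega\in K$ and driven solely by $M_n\to0$.

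The plan is to compare $P_n^1(\Omega)=\psi_n^1(\phi_n^1(\Omega))$ with $\Omega$ in three steps. \emph{Step 1 (radial convergence from the fixed point $c_\Omega$).} Let $\tilde b_n(e)$ be the radial function of $P_n^1(\Omega)$ measured from $c_\Omega$ (not from the polytope's own centroid). At each sampled direction the polytope passes through the vertex $c_\Omega+b_\Omega(e_i^n)e_i^n$, so $\tilde b_n(e_i^n)=b_\Omega(e_i^n)$. For a general $e\in S$ the ray $c_\Omega+\rho e$ meets a boundary simplex whose vertices correspond to directions within distance $O(M_n)$ of $e$, and I would bound $|\tilde b_n(e)-b_\Omega(e)|$ by two contributions: the variation of $b_\Omega$ across that simplex, controlled by $L\cdot O(M_n)$ via the Lipschitz bound, and the inscribed-chord (secant) defect of replacing a spherical cap by a flat simplex, controlled by $R_0\cdot O(M_n^2)$. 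Both vanish, so $\sup_{\Omega\in K}\sup_{e\in S}|\tilde b_n(e)-b_\Omega(e)|\to0$; in particular, for $n$ large the radial function stays $\ge r_0/2$ and $P_n^1(\Omega)$ is a genuine star domain.

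\emph{Step 2 (centroid convergence).} Writing volumes and first moments in polar coordinates about $c_\Omega$ and using that $c_\Omega$ is the centroid of $\Omega$, i.e. $\int_S e\,b_\Omega(e)^{d+1}\,d\sigma(e)=0$ for the surface measure $d\sigma$ on $S$, the displacement of the polytope's centroid is
\begin{equation}
c_{P_n^1(\Omega)}-c_\Omega=\frac{1}{(d+1)\,m(P_n^1(\Omega))}\int_S e\left(\tilde b_n(e)^{d+1}-b_\Omega(e)^{d+1}\right)d\sigma(e).
\end{equation}
Since $m(P_n^1(\Omega))\ge c\,r_0^{\,d}>0$ uniformly and the integrand is bounded by $(d+1)R_0^{\,d}\sup_{e}|\tilde b_n(e)-b_\Omega(e)|$, Step 1 gives $\sup_{\Omega\in K}d_E(c_\Omega,c_{P_n^1(\Omega)})\to0$. \emph{Step 3 (transfer to the true centroid and conclude).} Setting $\delta_n:=c_{P_n^1(\Omega)}-c_\Omega$, I would pass from $\tilde b_n$ (based at $c_\Omega$) to the genuine radial function $b_{P_n^1(\Omega)}$ (based at the polytope centroid): moving the base point by $\delta_n$ of a uniformly regular boundary (star-shaped, radius $\ge r_0/2$, Lipschitz) changes the radial function by at most $C(r_0,L)\norm{\delta_n}_2$, because transversality of the rays to the boundary is bounded below by $r_0$ and $L$. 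Combining the three steps,
\begin{equation}
\sup_{\Omega\in K}d_U(\Omega,P_n^1(\Omega))\le\sup_{\Omega\in K}\norm{\delta_n}_2+\sup_{\Omega\in K}\sup_{e}|b_\Omega(e)-\tilde b_n(e)|+C(r_0,L)\sup_{\Omega\in K}\norm{\delta_n}_2\to0.
\end{equation}

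\emph{Main obstacle.} The genuine difficulty is that the metric $d_U$ measures each boundary function relative to that domain's own centroid, while the decoder $\psi_n^1$ produces a polytope anchored at $c_\Omega$; Steps 2--3 are precisely the reconciliation of this mismatch, and they hinge on the vanishing first-moment identity for $c_\Omega$ together with the uniform lower radius bound $r_0$ (hence on compactness of $K$). A secondary point to verify carefully is that the piecewise-linear reconstruction, read as a radial function, retains a controlled Lipschitz constant, so that $P_n^1(\Omega)\in U$ and $\psi_n^1\in C(\phi_n^1(X),X)$ as required by Assumption \ref{ass:projection}.
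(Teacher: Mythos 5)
Your proposal is correct and follows essentially the same route as the paper's proof: both decompose the error into the convergence of the radial function of $P_n^1(\Omega)$ measured from $c_\Omega$ (controlled by $L\,M_n$), the displacement of the centroid (using a uniform positive lower bound on the volumes over $K$), and the recentering of the boundary function at the polytope's own centroid. The only differences are computational rather than structural --- you derive the centroid bound from the first-moment identity $\int_S e\,b_\Omega(e)^{d+1}\,d\sigma(e)=0$ in polar coordinates where the paper compares the Cartesian integrals $\int_\Omega x\,dx/m(\Omega)$ via the measure of the symmetric difference, and you replace the paper's explicit law-of-cosines estimate for the recentering step with an equivalent transversality bound of the form $C(r_0,L)\,\lVert\delta_n\rVert_2$.
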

\begin{proof}
The proof can be found in Appendix \ref{sec:proof_discretization_for_U}.
\end{proof}

Next we discuss the discretization for $\mathcal{B}(\Omega_0)$.
\begin{lemma}\label{lem:discretization_for_B}
   The metric space $(\mathcal{B}(\Omega_0), \norm{\cdot-\cdot}_{L^2(\Omega_0)})$ and the two sets of mappings $\{\phi_{n}^2\}$ and $\{\psi_{n}^2\}$ defined above satisfy Assumption \ref{ass:projection}, i.e.,
\begin{equation}
 \lim_{n\to\infty}\sup_{f\in K}\norm{f-P_n^2(f)}_{L^2(\Omega_0)} = 0
\end{equation}
holds for any compact $K\subset\mathcal{B}(\Omega_0)$, where $P_{n}^2 = \psi_{n}^2\circ\phi_{n}^2$ is the projection mapping.
\end{lemma}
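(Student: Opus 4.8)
The plan is to recognize $P_n^2=\psi_n^2\circ\phi_n^2$ as the $L^2$-orthogonal projection (equivalently, the cell-averaging / conditional-expectation operator) onto the finite-dimensional subspace of functions constant on each $\Omega_i^n$, and then to reduce the uniform statement over a compact $K$ to two ingredients: a uniform operator bound and pointwise convergence. First I would check that each $P_n^2$ is a linear contraction on $L^2(\Omega_0)$: by Cauchy--Schwarz (Jensen) on each cell,
\[
\norm{P_n^2 f}_{L^2(\Omega_0)}^2=\sum_{i=1}^n m(\Omega_i^n)\Big(\tfrac{1}{m(\Omega_i^n)}\int_{\Omega_i^n}f\Big)^2\le\sum_{i=1}^n\int_{\Omega_i^n}f^2=\norm{f}_{L^2(\Omega_0)}^2,
\]
so $\norm{P_n^2}\le 1$, and consequently $T_n:=I-P_n^2$ satisfies $\norm{T_n}\le 2$ uniformly in $n$.

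Second, I would establish pointwise convergence $\norm{f-P_n^2 f}_{L^2(\Omega_0)}\to 0$ for every $f\in\mathcal{B}(\Omega_0)\subset L^2(\Omega_0)$. This is immediate for uniformly continuous $f$: for $x\in\Omega_i^n$ one has $|f(x)-(P_n^2 f)(x)|\le\sup_{y\in\Omega_i^n}|f(x)-f(y)|\le\omega_f(M_n)$, where $\omega_f$ is the modulus of continuity and $\mathrm{diam}(\Omega_i^n)\le M_n$; since $M_n\to 0$ and $m(\Omega_0)<\infty$, this yields $\norm{f-P_n^2 f}_{L^2(\Omega_0)}\le m(\Omega_0)^{1/2}\omega_f(M_n)\to 0$. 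For a general $f$ I would invoke the density of $C_c(\Omega_0)$ (whose elements are uniformly continuous) in $L^2(\Omega_0)$ together with the contraction bound from Step 1 in a three-$\epsilon$ estimate: choosing $g\in C_c(\Omega_0)$ with $\norm{f-g}_{L^2(\Omega_0)}$ small, one has $\norm{T_n f}\le\norm{T_n(f-g)}+\norm{T_n g}\le 2\norm{f-g}_{L^2(\Omega_0)}+\norm{T_n g}$, and letting $n\to\infty$ then $g\to f$ gives the claim.

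Finally, I would upgrade pointwise to uniform convergence on a compact $K$. Given $\epsilon>0$, pick a finite $(\epsilon/6)$-net $\{f_1,\dots,f_J\}\subset K$; for any $f\in K$ choose $f_j$ with $\norm{f-f_j}_{L^2(\Omega_0)}<\epsilon/6$, so that $\norm{T_n f}\le\norm{T_n(f-f_j)}+\norm{T_n f_j}\le \epsilon/3+\norm{T_n f_j}$ using $\norm{T_n}\le 2$. Since there are only finitely many $f_j$, Step 2 supplies an $N$ with $\norm{T_n f_j}<\epsilon/3$ for all $n\ge N$ and all $j$, whence $\sup_{f\in K}\norm{f-P_n^2 f}_{L^2(\Omega_0)}<\epsilon$ for $n\ge N$, which is the desired limit.

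I expect the only genuinely delicate point to be the pointwise step for non-continuous $f$: because the partitions are merely required to shrink in diameter ($M_n\to 0$) and need not be nested, one cannot invoke a martingale convergence theorem and must instead lean on the contraction estimate plus density of uniformly continuous functions. Once that is in hand, the passage to uniform convergence is a routine equicontinuity argument, driven entirely by the uniform bound $\norm{T_n}\le 2$ and compactness of $K$; the whole argument takes place within $L^2(\Omega_0)$, using only that $\mathcal{B}(\Omega_0)\subset L^2(\Omega_0)$ and that $K$ is $L^2$-compact.
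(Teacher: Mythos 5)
Your proposal is correct and follows essentially the same route as the paper's proof: both exploit that $P_n^2$ is the $L^2$-orthogonal projection onto piecewise constants, reduce pointwise convergence to the smooth/compactly-supported case via density, and upgrade to uniformity on $K$ with a finite $\epsilon$-net. The only (immaterial) difference is that the paper uses the best-approximation inequality $\norm{f-P_n^2 f}_{L^2(\Omega_0)}\le\norm{f-P_n^2 g}_{L^2(\Omega_0)}$ directly, whereas you use the contraction bound $\norm{I-P_n^2}\le 2$ together with the triangle inequality.
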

\begin{proof}
The proof can be found in Appendix \ref{sec:proof_discretization_for_B}.
\end{proof}

Lastly, we show the result for $X$.
\begin{theorem}\label{thm:discretization_for_X}
   The metric space $(X, d_{X})$ and the two sets of mappings $\{\phi_{n}\}$ and $\{\psi_{n}\}$ defined above satisfy Assumption \ref{ass:projection}, i.e.,
\begin{equation}
 \lim_{n\to\infty}\sup_{f_\Omega\in K}d_{X}(f_\Omega, P_{n}(f_\Omega)) = 0
\end{equation}
holds for any compact $K\subset X$, where $P_{n}=\psi_{n}\circ\phi_{n}$ is the projection mapping.
\end{theorem}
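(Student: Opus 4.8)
The plan is to exploit the product structure of the discretization together with the isometry $\sigma$ of Eq.~\eqref{eq:sigma}, thereby reducing the claim for $X$ to the two lemmas already established for $U$ and for $\mathcal{B}(\Omega_0)$. First I would unwind the definitions of $\phi_n$ and $\psi_n$: by construction one has $\sigma(f_\Omega)=(\Omega,\,f_\Omega\circ\mathcal{D}[\Omega])$ and $\sigma(P_n(f_\Omega))=(P^1_{\kappa_1(n)}(\Omega),\,P^2_{\kappa_2(n)}(f_\Omega\circ\mathcal{D}[\Omega]))$. Since $d_X$ is, via $\sigma$, exactly the sum metric $d_U(\Omega_1,\Omega_2)+\norm{g_1-g_2}_{L^2(\Omega_0)}$ on $U\times\mathcal{B}(\Omega_0)$, the fact that $\sigma$ is an isometry yields the exact splitting
\[
d_X(f_\Omega,P_n(f_\Omega))=d_U(\Omega,P^1_{\kappa_1(n)}(\Omega))+\norm{f_\Omega\circ\mathcal{D}[\Omega]-P^2_{\kappa_2(n)}(f_\Omega\circ\mathcal{D}[\Omega])}_{L^2(\Omega_0)}.
\]

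Next I would pass to the supremum over $K$ and handle the two summands separately. The coordinate maps $\pi_1\circ\sigma:X\to U$ and $\pi_2\circ\sigma:X\to\mathcal{B}(\Omega_0)$ are both $1$-Lipschitz for $d_X$ and the sum metric, hence continuous, so their images $K_U:=(\pi_1\circ\sigma)(K)$ and $K_B:=(\pi_2\circ\sigma)(K)$ are compact in $U$ and in $\mathcal{B}(\Omega_0)\subset L^2(\Omega_0)$, respectively, as continuous images of the compact set $K$. Because the first summand depends on $f_\Omega$ only through $\Omega=(\pi_1\circ\sigma)(f_\Omega)$ and the second only through $g=(\pi_2\circ\sigma)(f_\Omega)$, the supremum over $K$ can be replaced by the separate suprema
\[
\sup_{f_\Omega\in K}d_X(f_\Omega,P_n(f_\Omega))\leq\sup_{\Omega\in K_U}d_U(\Omega,P^1_{\kappa_1(n)}(\Omega))+\sup_{g\in K_B}\norm{g-P^2_{\kappa_2(n)}(g)}_{L^2(\Omega_0)}.
\]

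Finally I would invoke the two lemmas. Since $\kappa_1$ and $\kappa_2$ are strictly increasing, $\kappa_1(n)\to\infty$ and $\kappa_2(n)\to\infty$ as $n\to\infty$; Lemma~\ref{lem:discretization_for_U} applied to the compact $K_U$ sends the first supremum to $0$, and Lemma~\ref{lem:discretization_for_B} applied to the compact $K_B$ sends the second to $0$, so the whole expression vanishes in the limit, which is the assertion of Assumption~\ref{ass:projection} for $(X,d_X)$.

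I expect the only genuinely delicate point to be the compactness of the two projected sets, in particular checking that $\pi_2\circ\sigma$ indeed maps into $\mathcal{B}(\Omega_0)$ (the composition $f_\Omega\circ\mathcal{D}[\Omega]$ is bounded Borel because $\mathcal{D}[\Omega]$ is a Borel bijection and $f_\Omega$ is bounded Borel) and that it is continuous into $L^2(\Omega_0)$, so that Lemma~\ref{lem:discretization_for_B}, which is stated for compact subsets of $\mathcal{B}(\Omega_0)$, genuinely applies to $K_B$. Everything else is a mechanical consequence of the isometry and the sum-metric structure, requiring no further estimate.
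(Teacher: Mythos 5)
Your proposal is correct and follows essentially the same route as the paper's proof: both split $d_X(f_\Omega,P_n(f_\Omega))$ exactly into the $d_U$-term and the $L^2(\Omega_0)$-term via the definition of $d_X$ and the product structure of $\phi_n,\psi_n$, bound the supremum over $K$ by the two separate suprema over the compact images $\pi_1(\sigma(K))$ and $\pi_2(\sigma(K))$, and then invoke Lemma \ref{lem:discretization_for_U} and Lemma \ref{lem:discretization_for_B}. The extra care you note about $\kappa_i(n)\to\infty$ and the continuity of $\pi_2\circ\sigma$ is sound but implicit in the paper's shorter argument.
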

\begin{proof}
    Since the projections $\pi_1,\pi_2$ (defined in Eq. \eqref{eq:projections}) are continuous maps which preserve the compactness, we have compact sets $\pi_1(\sigma(K))\subset U$ and $\pi_2(\sigma(K))\subset\mathcal{B}(\Omega_0)$. With the definition of $d_X$ and $P_n$,
    \begin{equation}
        d_{X}(f_\Omega, P_{n}(f_\Omega))=d_U(\Omega,P_{\kappa_1(n)}^1(\Omega))+\norm{f_\Omega\circ\mathcal{D}[\Omega]-P_{\kappa_2(n)}^2(f_\Omega\circ\mathcal{D}[\Omega])}_{L^2(\Omega_0)},
    \end{equation}
    then we have
    \begin{equation}
        \sup_{f_\Omega\in K}d_{X}(f_\Omega, P_{n}(f_\Omega))\leq\sup_{\Omega\in\pi_1(\sigma(K))}d_U(\Omega,P_{\kappa_1(n)}^1(\Omega))+\sup_{f\in\pi_2(\sigma(K))}\norm{f-P_{\kappa_2(n)}^2(f)}_{L^2(\Omega_0)}.
    \end{equation}
    Lemma \ref{lem:discretization_for_U} and Lemma \ref{lem:discretization_for_B} immediately lead to
    \begin{equation}
        \lim_{n\to\infty}\sup_{f_\Omega\in K}d_{X}(f_\Omega, P_{n}(f_\Omega)) = 0,
    \end{equation}
    thus the proof is finished.
\end{proof}
Up to now, we have completed this deformation-based framework for the star domains, for which the approximation property of discretization is guaranteed. For a specific PDE problem, the solution mapping can be learned through the above framework as long as it is indeed a continuous mapping as defined in Theorem \ref{thm:representation_deformed} and \ref{thm:representation_extended}. At the end of this subsection we present a simple case demonstrating that the solution mapping of the 2-d Poisson equation is continuous.

\textbf{Continuity of Poisson equation on varying star domains.} Consider the Poisson equation
\begin{equation}
	\begin{cases}
		-\Delta u=f &\quad \mbox{in} \; \Omega,
		\\
		u = 0 &\quad \mbox{on} \; \partial\Omega,
	\end{cases}	
\end{equation}
where $\Omega\in\R^2$ is a star domain with smooth boundary $\partial\Omega$. Let
\begin{equation}
    U=\{\Omega\in\R^2|{\rm \Omega\ is\ open\ star\ domain,\ \partial\Omega\ is\ smooth}\},
\end{equation}
and $K$ be a compact set in $\bigsqcup_{\Omega\in U}C\left(\Omega\right)$, thus $K$ is also a compact set in $X=\bigsqcup_{\Omega\in U}\mathcal{B}\left(\Omega\right)$. We aim to show the continuity of the solution mapping
\begin{equation}
\begin{split}
    \mathcal{H}:K&\longrightarrow X.\\
    f_\Omega&\longmapsto u_\Omega
\end{split}
\end{equation}
\begin{theorem}\label{thm:continuity_poisson}
    The solution mapping $\mathcal{H}$ of 2-d Poisson equation defined on varying star domains is continuous on compact set $K\subset \bigsqcup_{\Omega\in U}C\left(\Omega\right)$.
\end{theorem}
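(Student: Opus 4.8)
The plan is to show that $\mathcal{H}: f_\Omega \mapsto u_\Omega$ is continuous on $K$ by taking an arbitrary convergent sequence $f_{\Omega_n} \to f_\Omega$ in the metric $d_X$ and proving $d_X(u_{\Omega_n}, u_\Omega) \to 0$. By the definition of $d_X$, convergence $f_{\Omega_n} \to f_\Omega$ means simultaneously $d_U(\Omega_n, \Omega) \to 0$ and $\|f_{\Omega_n}\circ\mathcal{D}[\Omega_n] - f_\Omega\circ\mathcal{D}[\Omega]\|_{L^2(\Omega_0)} \to 0$, and we must establish the analogous two-part convergence for the solutions. The first part, $d_U(\Omega_n,\Omega)\to 0$, is free since $\mathcal{H}$ preserves domains. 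The real content is the $L^2(\Omega_0)$-convergence of the pulled-back solutions $u_{\Omega_n}\circ\mathcal{D}[\Omega_n] \to u_\Omega\circ\mathcal{D}[\Omega]$.

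First I would set up the standard PDE machinery. For each $n$, $u_{\Omega_n}$ solves $-\Delta u_{\Omega_n} = f_{\Omega_n}$ with zero Dirichlet data on the star domain $\Omega_n$. The natural approach is to transport everything to the fixed standard domain $\Omega_0 = B(0,1)$ via the deformation $\mathcal{D}[\Omega_n]$: setting $v_n := u_{\Omega_n}\circ\mathcal{D}[\Omega_n]$, the equation becomes a variable-coefficient elliptic problem on $\Omega_0$ whose coefficients are built from the Jacobian of $\mathcal{D}[\Omega_n]$ and whose right-hand side involves $f_{\Omega_n}\circ\mathcal{D}[\Omega_n]$ (times the Jacobian determinant). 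Since $\mathcal{D}$ is continuous in $n$ and the domains lie in a set with uniformly controlled geometry, I would argue that these transformed coefficients converge to those of the limiting problem, and the transformed right-hand sides converge in $L^2(\Omega_0)$ by hypothesis. Then I would invoke elliptic regularity together with a uniform coercivity/energy estimate to pass to the limit, concluding $v_n \to v := u_\Omega\circ\mathcal{D}[\Omega]$ in $L^2(\Omega_0)$ (indeed in $H^1_0(\Omega_0)$, which is stronger than needed).

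The step promised as ``step 1'' in the theorem statement — the continuity of $\pi_U$, $\mathcal{E}$, and $\mathcal{R}$ — I would dispatch separately and somewhat mechanically: $\pi_U$ is $1$-Lipschitz directly from the form of $d_X$; continuity of $\mathcal{E}$ (zero-extension to $V$) and $\mathcal{R}$ (restriction) follows from controlling the $L^2(V)$-norm of the symmetric difference $\Omega_n \triangle \Omega$, which shrinks because the boundary functions $b_{\Omega_n}$ converge uniformly to $b_\Omega$. This geometric convergence of the domains is the common thread linking the auxiliary continuity claims to the main solution estimate.

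\textbf{The main obstacle} I anticipate is controlling the regularity and convergence uniformly across the varying domains — specifically, obtaining an energy or Schauder-type estimate for $u_{\Omega_n}$ with constants that do not degenerate as $\Omega_n$ varies. Because the deformation $\mathcal{D}[\Omega]$ need not be smooth (only Borel), the transformed coefficients may be merely bounded measurable rather than continuous, so I cannot naively use classical Schauder theory; the cleanest route is the variational (Lax–Milgram) formulation on $\Omega_0$, where uniform ellipticity of the transformed operator must be extracted from uniform bounds on the deformation Jacobians. Here the smoothness of $\partial\Omega$ assumed in this particular theorem, together with the compactness of $K$ (which caps the relevant geometric quantities like $\sup_e b_\Omega(e)$ and its oscillation), should deliver the required uniform constants. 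I would therefore treat the uniform coercivity estimate as the crux, deriving everything else from the convergence of the geometry and the right-hand sides.
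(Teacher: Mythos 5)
Your overall reduction (continuity of $\mathcal{H}$ in $d_X$ reduces to $L^2(\Omega_0)$-convergence of the pulled-back solutions $u_{\Omega_n}\circ\mathcal{D}[\Omega_n]$) matches the paper's, but the core of your argument --- pulling the PDE back to $\Omega_0$ and treating it as a variable-coefficient elliptic problem via Lax--Milgram --- has a gap that I do not think can be repaired under the stated hypotheses. The transformed coefficients are built from the Jacobian of $\mathcal{D}[\Omega_n]$, and for the star-domain deformation $\mathcal{D}[\Omega](x)=c_\Omega+b_\Omega(x/\norm{x}_2)\,x$ this Jacobian involves the angular derivative $b_{\Omega_n}'$. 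The metric $d_U$ controls only $\sup_e|b_{\Omega_n}(e)-b_\Omega(e)|$, and compactness of $K$ in $d_X$ yields only precompactness of the boundary functions in $C(S)$ (uniform boundedness and equicontinuity); neither gives any bound on, let alone convergence of, $b_{\Omega_n}'$. A family such as $b_n(\theta)=1+n^{-1/2}\sin(n\theta)$ is uniformly bounded and equicontinuous, hence can sit inside a compact $K$, yet its derivatives blow up --- so the uniform ellipticity constants you hope to extract from compactness of $K$ are simply not there. Even if one restricts to a uniformly Lipschitz class of boundary functions (as in the paper's general star-domain class), $C^0$-convergence of $b_{\Omega_n}$ plus an $L^\infty$ bound gives at best weak-* convergence of the transformed coefficients, which by the theory of H-convergence is not sufficient to conclude that the solutions converge to the solution of the operator with the limiting coefficients. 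In short, your crux step requires $C^1$-type control of the geometry that the topology on $U$ does not supply.

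The paper sidesteps this entirely by never differentiating the deformation map: it represents $u_{\Omega_n}(x)=\int_{V} G_n(x,y)f_{\Omega_n}(y)\,dy$ using the Green's functions of the physical domains $\Omega_n$, invokes a uniform-convergence result for Green's functions of planar domains under exactly this weak ($C^0$-boundary) notion of domain convergence, obtains pointwise convergence $u_{\Omega_n}\to u_\Omega$ a.e., upgrades it to $L^2(V)$ via a logarithmic dominating function near $\partial\Omega_n$ together with the Vitali convergence theorem, and only at the very end composes with $\mathcal{D}[\Omega_n]$ --- a step that uses nothing beyond boundedness of $b_{\Omega_n}$ in a polar change of variables. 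If you want to keep a variational flavor you would need to work on the physical domains (e.g., via Mosco convergence of the spaces $H^1_0(\Omega_n)$) rather than pulling back through $\mathcal{D}$. Your side remark on $\mathcal{E}$ is also slightly off: its continuity requires convergence of $f_{\Omega_n}$ to $f_\Omega$ on the overlap region, which the paper obtains from equicontinuity of the pulled-back data via the Arzel\`a--Ascoli theorem, not merely from the shrinking of the symmetric difference $\Omega_n\triangle\Omega$.
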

\begin{proof}
The proof can be found in Appendix \ref{sec:proof_continuity_poisson}.
\end{proof}
With above theoretical results, the convergency of the proposed methods for 2-d Poisson equations on varying star domains is rigorously guaranteed. 

Lastly, we briefly present the algorithm for learning the solution mapping of Poisson equations defined on varying star domains based on MIONet in Algorithm \ref{alg:star}, together with an illustration in Figure \ref{fig:illustration}. Assume that all the domain centroids have been translated to the origin.

\begin{algorithm}\label{alg:star}
\caption{Learning the solution mapping of Poisson equations defined on varying star domains based on MIONet}
\begin{algorithmic}
\REQUIRE{Dataset $\{f_{\Omega_i},u_{\Omega_i}\}_{i=1}^N$}
\ENSURE{Model $\hat{\mathcal{H}}_{\theta^*}$ for predicting the solution $u_\Omega$ given $f_\Omega$}
\STATE{\textbf{1.} For each $\Omega_i$, compute $$\phi_{q_1}^1(\Omega_i):=(r_1^i,...,r_{q_1}^i)\in\R^{q_1},$$and $r^i_j$ is the length of the line segment where the ray at angle $\frac{2\pi j}{q_1}$ intersects the star domain $\Omega_i$.}
\STATE{\textbf{2.} Fix $q_2$ uniformly distributed points in the unit ball, denoted by $x_1,...,x_{q_2}\in\R^d$.}
\STATE{\textbf{3.} For each $f_{\Omega_i}\circ\mathcal{D}[\Omega_i]$, compute $$\phi_{q_2}^2(f_{\Omega_i}\circ\mathcal{D}[\Omega_i]):=(f_{\Omega_i}\circ\mathcal{D}[\Omega_i](x_1),...,f_{\Omega_i}\circ\mathcal{D}[\Omega_i](x_{q_2}))\in\R^{q_2},$$and the deformation system $\mathcal{D}$ is given by \eqref{eq:def_star}.}
\STATE{\textbf{4.} Initialize the first branch net $\tilde{\mathcal{H}}_{\theta}^1:\R^{q_1}\to\R^{p}$, the second branch net $\tilde{\mathcal{H}}_{\theta}^2:\R^{q_2}\to\R^{p}$, and the trunk net $\tilde{u}_\theta:\R^d\to\R^p$. Set
\begin{equation*}
    \hat{\mathcal{H}}_{\theta}(f_\Omega)(y):=\mathcal{S}(\tilde{\mathcal{H}}_{\theta}^1(\phi_{q_1}^1(\Omega))\odot\tilde{\mathcal{H}}_{\theta}^2(\phi_{q_2}^2(f_\Omega\circ\mathcal{D}[\Omega]))\odot\tilde{u}_\theta(y)),\quad y\in\R^d,
\end{equation*}
where $\odot$ is the Hadamard product (element-wise product) and $\mathcal{S}$ is the summation of all the components of a vector.
}
\STATE{\textbf{5.} Train the loss function defined in \eqref{eq:train}, and obtain $\theta^*$.
}
\STATE{\textbf{6.} Predict the solution $u_\Omega$ given $f_\Omega$ via \eqref{eq:predict} based on the trained model $\hat{\mathcal{H}}_{\theta^*}$.}
%\RETURN Model $\hat{\mathcal{H}}_{\theta^*}$
\end{algorithmic}
\end{algorithm}

\begin{figure}[htbp]
    \centering
    \includegraphics[width=1.00\textwidth]{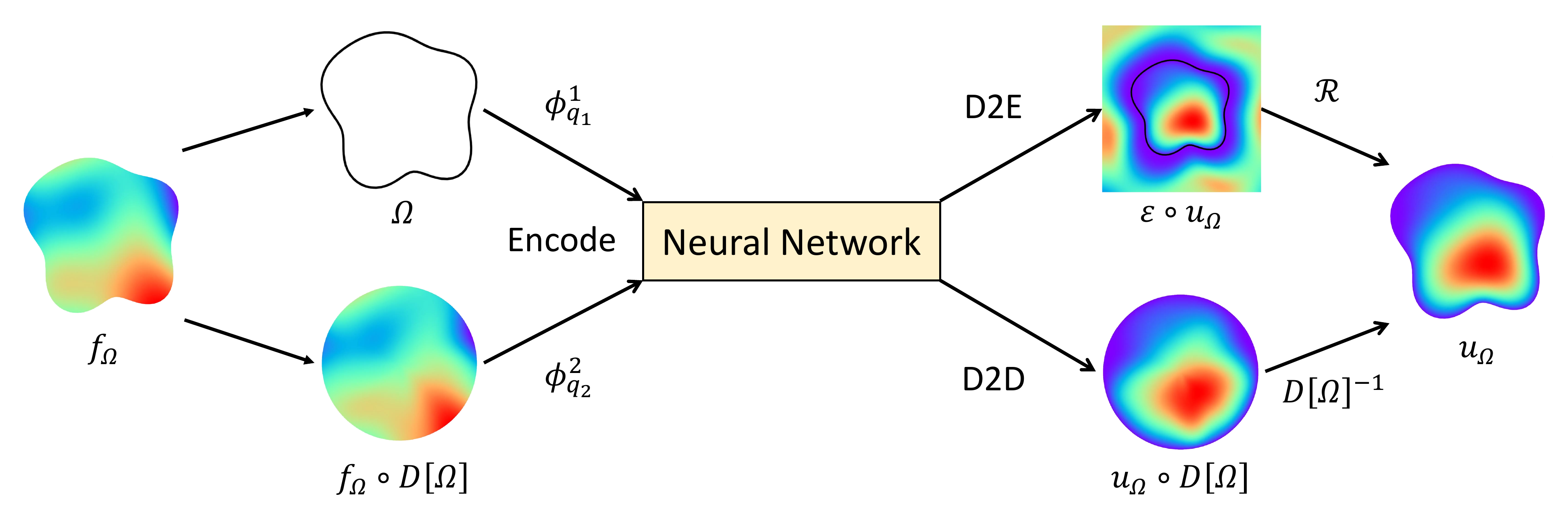}
    \caption{An illustration of the deformation-based framework.}
    \label{fig:illustration}
    \end{figure}

\subsection{Other examples}

As the example of star domains is used to demonstrate the the theoretical procedure, here we further show several examples highlighting some interesting features of this framework. 

\subsubsection{Locally deformed domains: discontinuous deformation mappings}\label{sec:local_deformed}

Consider an important situation that the PDE problems are defined on complex and large domains. In such a case, we may deform only a local part of the whole domain to achieve fast prediction for specific industrial purposes like the inverse design, for which it is hard to find global continuous deformation mappings, much less diffeomorphic ones. Fortunately, it is not difficult to establish piece-wise continuous deformation mappings instead. Here we present a simple example which will be studied in the experiment part. 

Let $\Omega_1:=[0,1]^2$, $\Omega_2(a):=[a-0.15, a+0.15]\times[1, 1.3]$, and
\begin{equation}
    U:=\{\Omega(a)=\Omega_1\cup\Omega_2(a)|a\in [0.3,0.7]\}.
\end{equation}
The metric for $U$ could be
\begin{equation}
    d_U(\Omega(a_1),\Omega(a_2)):=|a_1-a_2|.
\end{equation}
The deformation mapping $\mathcal{D}[\Omega]$ can be established via a moving on $\Omega_2$ as
\begin{equation}
    \mathcal{D}[\Omega(a)](x):=
    \begin{cases}
    x &\quad\quad x\in\Omega_1\\
    x+(a-0.5,0) &\quad\quad x\in\Omega_2(0.5)
    \end{cases},
\end{equation}
which is a bounded Borel bijective between $\Omega_0:=\Omega_1\cup\Omega_2(0.5)$ and $\Omega(a)$, and discontinuous along $\Omega_1\cap\Omega_2(0.5)$. This deformation mapping remains unchanged on the main domain $\Omega_1$, while focusing on the changes in the local domain $\Omega_2$. The related discretization for $X=\bigsqcup_{\Omega\in U}\mathcal{B}(\Omega)$ can be written as
\begin{equation}
    \phi_n(f_{\Omega(a)}):=(a,f_{\Omega(a)}\circ\mathcal{D}[\Omega(a)](x_1),...,f_{\Omega(a)}\circ\mathcal{D}[\Omega(a)](x_n))\in\R^{n+1},
\end{equation}
where $x_1,...,x_n$ are uniformly distributed in $\Omega_0$.

\subsubsection{Composite domains: composite discretization}\label{sec:composite_domain}
A complex domain may be regarded as the composite of several simple domains in the sense of discretization. A typical example is the annulus-like domains, which can be discretized via two star domains. Assume that both outer and inner boundaries of the annulus-like domain can be regarded as the boundaries of the star domains centered at the centroid of the inner region. Define
\begin{equation}
\begin{split}
U:=&\{\Omega\in\R^2|\Omega{\rm \ is\ a\ annulus}-{\rm like\ domain\ with}\\ &{\rm Lipschitz\ coefficients\ of\ outer\ and\ inner\ star\ domains\ no\ more\ than\ }L\}.
\end{split}
\end{equation}
Let $\Omega^{\rm out}$ and $\Omega^{\rm in}$ be the outer and inner star domains of $\Omega$, $c_{\Omega^{\rm in}}$ be the centroid of $\Omega^{\rm in}$, $b_{\Omega^{\rm out}}$ and $b_{\Omega^{\rm in}}$ be the boundary functions centered at $c_{\Omega^{\rm in}}$, then the metric for $U$ can be
\begin{equation}
    d_U(\Omega_1,\Omega_2):=d_{E}(c_{\Omega_1^{\rm in}},c_{\Omega_2^{\rm in}})+\sup_{e\in S}| b_{\Omega_{1}^{\rm in}}(e)-b_{\Omega_{2}^{\rm in}}(e)|+\sup_{e\in S}| b_{\Omega_{1}^{\rm out}}(e)-b_{\Omega_{2}^{\rm out}}(e)|,
\end{equation}
and the deformation $\mathcal{D}[\Omega]$ is defined as
\begin{equation}
\begin{split}
    \mathcal{D}[\Omega](x):=&c_{\Omega^{\rm in}}+(b_{\Omega^{\rm out}}(x/\norm{x}_2) (2\norm{x}_2-1)+b_{\Omega^{\rm in}}(x/\norm{x}_2)(2-2\norm{x}_2))\cdot x/\norm{x}_2,\\ &x\in \Omega_0:=B(0,1)\backslash B(0,0.5).
\end{split}
\end{equation}
The discretization for $X$ is given by
\begin{equation}
\begin{split}
    \phi_n(f_{\Omega}):=&(c_{\Omega^{\rm in}},b_{\Omega^{\rm in}}(e_1^1),...,b_{\Omega^{\rm in}}(e_{\kappa_1(n)}^1),b_{\Omega^{\rm out}}(e_1^2),...,b_{\Omega^{\rm out}}(e_{\kappa_2(n)}^2),\\&f_\Omega\circ\mathcal{D}[\Omega](x_1),...,f_\Omega\circ\mathcal{D}[\Omega](x_{\kappa_3(n)})),
\end{split}
\end{equation}
where $\{e_i^1\},\{e_i^2\}\subset S$, $\{x_i\}\subset\Omega_0$ are uniformly distributed.

\section{Numerical experiments}\label{sec:experiment}
In this section we verify our theoretical results via some numerical examples. Basically, we study the Poisson equation \eqref{eq:poisson_problem}, with several different settings. Note that all the experiments apply the Adam optimizer. The codes are published in GitHub https://github.com/jpzxshi/deformation.

\subsection{Polygonal domains}
We first consider the case of Poisson equation \eqref{eq:poisson_problem} with $k\equiv1$, $g\equiv0$, thus the target mapping is $f_\Omega\mapsto u_\Omega$. We initially generate 1500 convex quadrilaterals (pentagons/hexagons) with related triangular meshes contained within $[0,1]^{2}$, totally 4500 samples. Following this, we generate 4500 random functions on these polygons via Gaussian process, forming the set of random functions as $f_{\Omega}$. We then solve these 4500 equations via finite element method and obtain the set of solutions $u_\Omega$ based on the meshes. There are totally 4500 training data points, together with 1500 test data points generated through the same procedure. We learn this dataset by D2E-MIONet, D2D-MIONet, as well as a benchmark Geo-FNO (O mesh). As for the network size of MIONet, we use a branch net of size [200, 500, 500, 500, 500, 1000] (4 hidden layers with width 500) for the domain, a linear branch net of size [5000, 1000] for the function, as well as a trunk net of size [2, 500, 500, 500, 500, 1000]. We train the MIONet with full-batch. For the benchmark Geo-FNO, we first set the number of layers, the width, and the mode to 6, 64, and 12, respectively, which has a similar number of parameters to the MIONet. Then we train such FNO with different batch sizes 1, 2, 5, 10, 20, 50, 100, 200, 500, 1000, and find the best one (5 is best according to our results). Next we fix the best batch size and test different network sizes of FNO, with the numbers of layers 5, 6, 7, and the widths 32, 64, 128. Each case is exhaustively trained until the test error converges. The final relative $L^2$ errors of D2E-MIONet, D2D-MIONet, Geo-FNO are 3.51\%, 3.46\% and 6.52\%, respectively, shown in Table \ref{tab:errors}. The predictions are presented in Figure \ref{fig:polygonal}. D2E-MIONet and D2D-MIONet exhibit comparable test errors, while D2E-MIONet predicts solutions with better regularity than D2D-MIONet, especially along the line segments connecting vertices to the centroid, where the deformation mappings are not differentiable. Geo-FNO performs worst on both accuracy and regularity, due to its rectangular latent domain which is not diffeomorphic to these polygonal domains.

\begin{table}[htbp]
\centering
\begin{tabular}{|c|c|c|c|}
\hline
Method                & D2E-MIONet & D2D-MIONet & Geo-FNO \\ \hline
Relative $L^2$ error & 3.51\%     & 3.46\%     & 6.52\%  \\ \hline
\end{tabular}
\caption{The relative $L^2$ errors of D2E-MIONet, D2D-MIONet and Geo-FNO for Poisson equations on polygonal domains.}
\label{tab:errors}
\end{table}

\begin{figure}[htbp]
\centering
\includegraphics[width=0.99\textwidth]{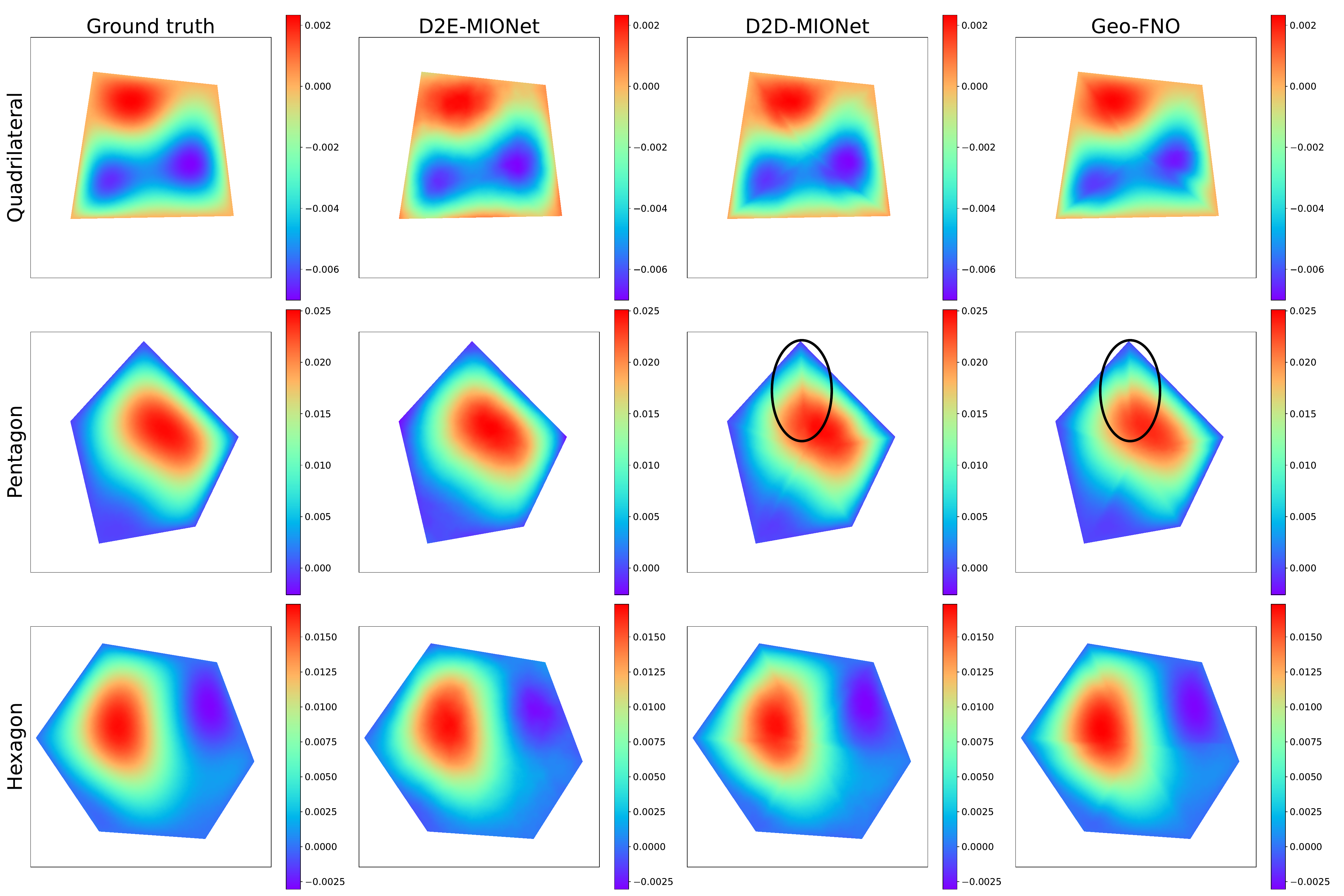}
\caption{The predictions of D2E-MIONet, D2D-MIONet, and Geo-FNO for Poisson equations on polygonal domains. The portion marked by the black circles shows that D2D-MIONet and Geo-FNO have bad regularity along the line segments connecting vertices to the centroid, while D2E-MIONet remains free from such problem.}
\label{fig:polygonal}
\end{figure}

In addition, we test the hybrid iterative method (HIM) \cite{hu2024hybrid} based on Gauss-Seidel (GS) iteration and the trained D2E-MIONet, since this model preserves linearity on the source term. The size of the mesh is approximately one million, and the correction period is set to 13K. The convergence speed of the HIM is 8.4 times that of the original GS method, as presented in Table \ref{tab:him}.

\begin{table}[htbp]
\centering
\begin{tabular}{|c|c|c|c|}
\hline
          & $\#$Iterations & Time (s)  & Speed up\\ \hline
GS        & 1142414   &   7780  &      /    \\ \hline
HIM & 121239  &  925  & $\times$8.4      \\ \hline
\end{tabular}
\caption{Comparison of the GS iterative method and the HIM based on GS and D2E-MIONet. The size of the mesh is approximately one million, and the correction period is set to 13K. The HIM achieves an acceleration effect of 8.4 times compared to the GS iteration.}
\label{tab:him}
\end{table}

\subsection{Smooth star domains with fully-parameterized solution mapping}\label{sec:smooth_star_domain}

Now we learn the fully-parameterized solution mapping, i.e. $(k_\Omega,f_\Omega,g_{\partial\Omega})\mapsto u_\Omega$, on smooth star domains. Using a similar strategy as previous subsection, we generate a dataset of size 2500, and learn this dataset by D2D-MIONet. An example of prediction is shown in Figure \ref{fig:smooth_2d}. It achieves 3.0\% relative $L^2$ error. We also test a 3-d case with a dataset of size 4000, and show an example of prediction in Figure \ref{fig:smooth_3d}. The relative $L^2$ error of the 3-d case is 12\%.

\begin{figure}[htbp]
\centering
\includegraphics[width=0.99\textwidth]{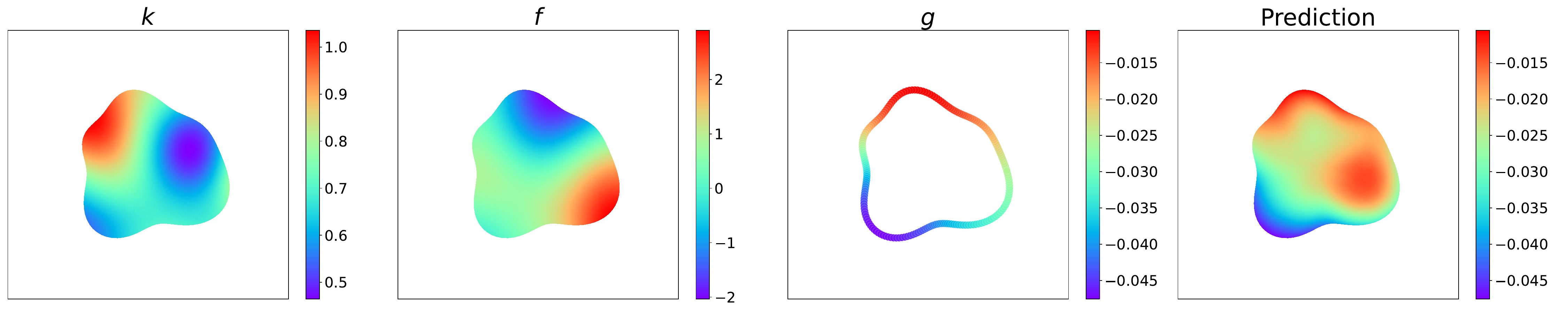}
\caption{An example of prediction on the smooth star domain with fully-parameterized solution mapping.}
\label{fig:smooth_2d}
\end{figure}

\begin{figure}[htbp]
\centering
\includegraphics[width=0.99\textwidth]{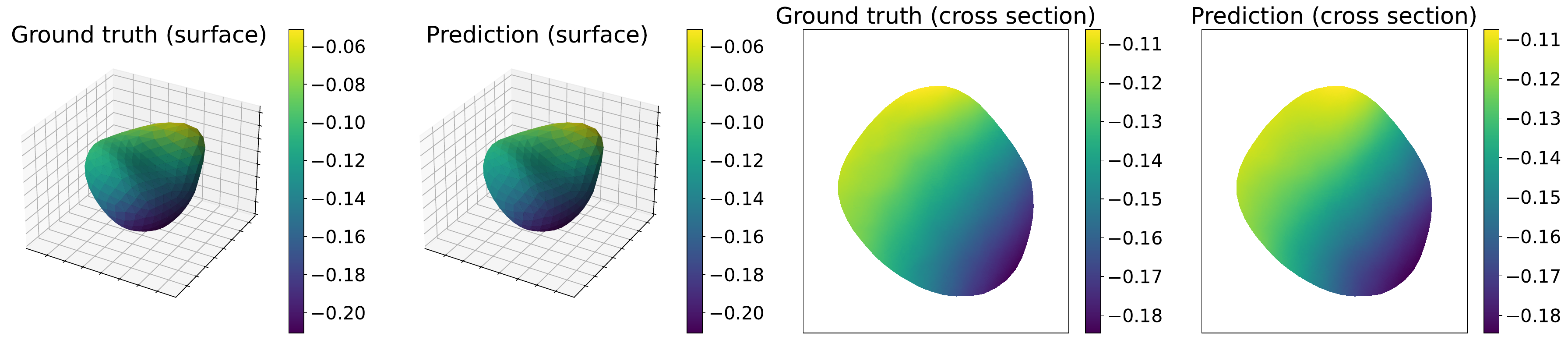}
\caption{An example of prediction in the three-dimensional space.}
\label{fig:smooth_3d}
\end{figure}

\subsection{Locally deformed domains}

In this subsection, we consider the mapping $f_\Omega\mapsto u_\Omega$, and study the domains where only local parts are changing. The setting of the domains follows Section \ref{sec:local_deformed}. In this case, a larger square is fixed while a smaller square is moving above the larger square. The deformation mappings are discontinuous on the contact edge of the two adjacent squares, and keep identity on the larger square. We generate a dataset of size 3800, and learn this dataset by D2E-MIONet. The relative $L^2$ error of the trained model is 1.9\%, and we display two examples of predictions in Figure \ref{fig:square}. As aforementioned, the predicted solutions of D2E-MIONet show good regularity, even on the contact edges.

\begin{figure}[htbp]
\centering
\includegraphics[width=0.99\textwidth]{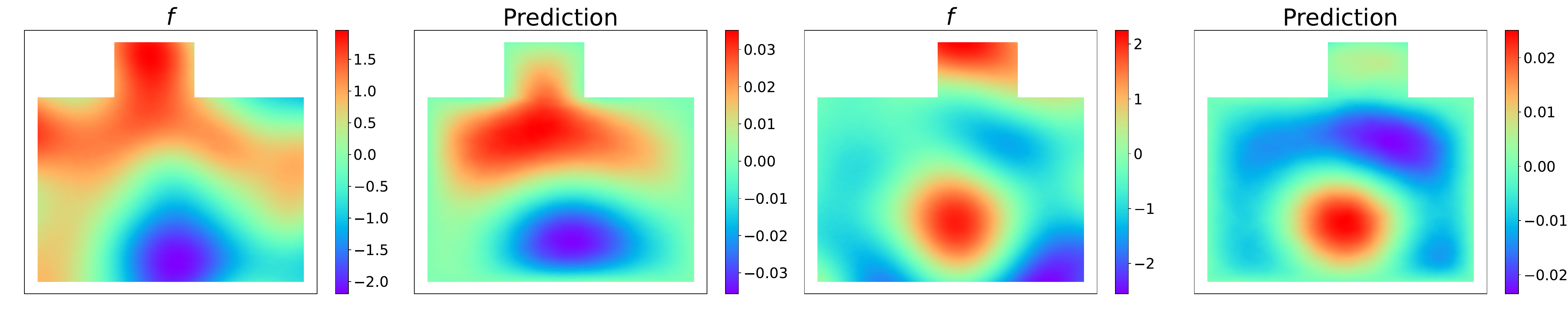}
\caption{Two examples of predictions on locally deformed domains.}
\label{fig:square}
\end{figure}

\subsection{Other examples}

Finally we present two more other examples for reference. The first one is the incompressible flow in a pipe which involves the incompressible Navier-Stokes equation. Adapting to the shape of the pipe regions, the standard domain in this experiment is chosen as a rectangle. The dataset is from \cite{li2022fourier}. The second example is the Poisson equation on the annular domains. Such a type of domains are generated as composite domains in Section \ref{sec:composite_domain}, with the standard circular ring as the reference domain. We present the predictions in Figure \ref{fig:others}.

\begin{figure}[htbp]
\centering
\includegraphics[width=0.99\textwidth]{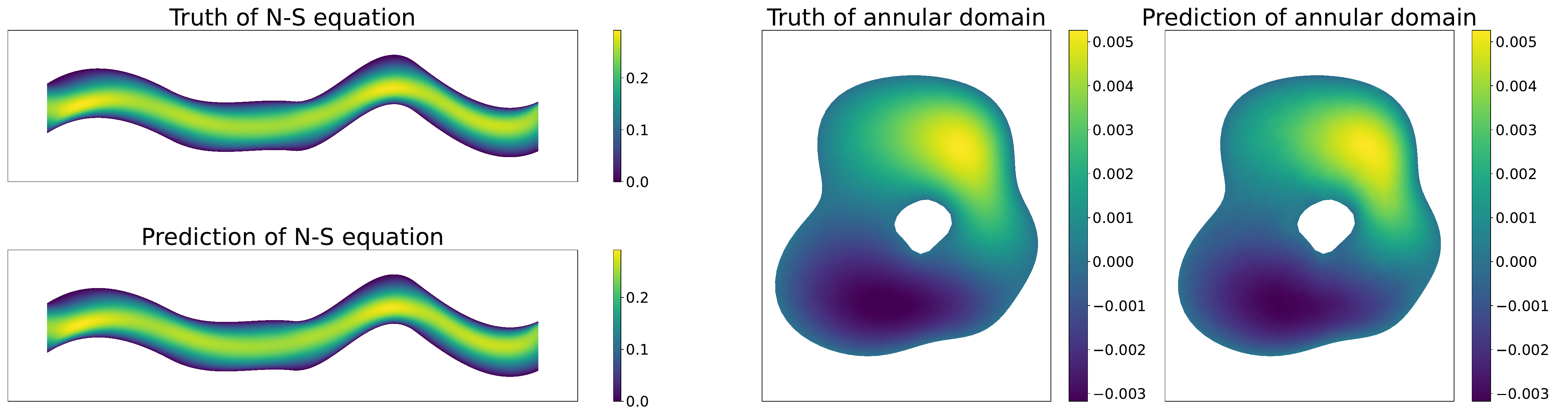}
\caption{(Left) The prediction on the pipe for N-S equation. (Right) The prediction on the annular domain for Poisson equation.}
\label{fig:others}
\end{figure}

\section{Conclusions}\label{sec:conclusions}

We have established a deformation-based framework for learning solution mappings of PDEs defined on varying domains. This framework starts by encoding the functions on different domains via a deformation system and a standard reference domain. According to the solution spaces that neural networks mapping to, two subframeworks are raised, called D2D (Deformation-based discretization to Deformed solution space) and D2E (Deformation-based discretization to Extended solution space). We built a complete convergence analysis framework for the D2D and the D2E subframeworks. Specifically, we presented a theoretical example of Poisson equation on star domains, and systematically proved its convergence property.

There are four important features of this framework: (1) The domains under consideration are not required to be diffeomorphic, therefore a wide range of regions can be covered by one model provided they
are homeomorphic. (2) The deformation mapping is unnecessary to be continuous, thus it can
be flexibly established via combining a primary identity mapping and a local deformation
mapping. This capability facilitates the resolution of large systems where only local parts of the geometry undergo change. (3) In fact, the recent methods (Geo-FNO, DIMON, etc.) belong to the D2D subframework. We point out that the D2D subframework introduces regularity issues, whereas the proposed D2E subframework remains free from such problems. From a comprehensive perspective, the D2E subframework is better. (4) If a linearity-preserving neural operator such as MIONet is adopted,
this framework still preserves the linearity of the surrogate solution mapping on its source term
for linear PDEs, thus it can be applied to the hybrid iterative method (HIM). 

Lastly we showed several numerical experiments to test the methods, mainly including Poisson equations on polygonal domains, smooth star domains, and locally deformed domains. In terms of performance, the D2D and the D2E subframeworks exhibit comparable test errors, while the D2E subframework predicts solutions with better regularity. We also adopted the trained D2E-MIONet to implement the HIM on the polygonal domain with unstructured mesh of size approximately one million, and achieved an acceleration effect of 8.4 times. 

As the deformation-based framework enables the derivative computation of the solution with respect to the shape, we expect to apply it to the shape optimization and the inverse design in the future.

\section*{Acknowledgments}

This research is supported by National Natural Science Foundation of China (Grant Nos. 12171466, 12271025 and 92470119).

\appendix

\section*{Appendix}

\section{Proof of Theorem \ref{thm:app_metric_to_banach}}\label{sec:proof_app_metric_to_banach}

This theorem is a general form of the approximation theorem of MIONet. Note that the proof for MIONet on Banach spaces in fact only utilizes the approximation result of Schauder basis, and it does not involve other properties of the Banach spaces. Such an approximation result can be replaced by Assumption \ref{ass:projection}. Here we simply show the key points.

Based on the injective tensor product \cite{ryan2002introduction}
\begin{equation}
    C(K,Y)= C(K)\itp Y,
\end{equation}
there exist some $f_j\in C(K)$ (can be extended to a continuous function on $X$ by Dugundji's theorem \cite{dugundji1951extension}) and $u_j\in Y$ such that
\begin{equation}
\norm{\mathcal{G}-\sum_{j=1}^p f_{j}\cdot u_j}_{C(K,Y)}<\epsilon.
\end{equation}
Assumption \ref{ass:projection} shows that there exist sufficiently large $q$, such that
\begin{equation}
\norm{\mathcal{G}-\sum_{j=1}^p f_{j}(P_{q}(\cdot))\cdot u_j}_{C(K,Y)}<\epsilon.
\end{equation}
Let $\tilde{\mathcal{G}}:=(f_1\circ\psi_{q},...,f_p\circ\psi_{q})$ and $u(\alpha_1,...,\alpha_p):=\sum_{i=1}^p\alpha_iu_i$, then we immediately obtain \eqref{eq:app_general}. \qed

\section{Multiple-input\&output case}\label{sec:mul_in_out}

Although the above discussion focuses on the single-input and single-output case ($f_\Omega$ to $u_\Omega$), we can easily generalize it to multiple-input and multiple-output scenarios. Assume that there are several input functions $f_\Omega^1,f_\Omega^2,...,f_{\Omega}^k$ (source term or coefficient functions related to the differential operator) defined on $\Omega$, $g_{\partial\Omega}^1,g_{\partial\Omega}^2,...,g_{\partial\Omega}^l$ (boundary conditions) defined on $\partial\Omega$, as well as several output solutions $u_\Omega^1,u_\Omega^2,...,u_\Omega^m$ defined on $\Omega$, their corresponding spaces are
\begin{equation}
    f_\Omega^i\in X=\bigsqcup_{\Omega\in U}\mathcal{B}(\Omega),\quad g_{\partial\Omega}^i\in Y=\bigsqcup_{\Omega\in U}\mathcal{B}(\partial\Omega),\quad u_\Omega^i\in X=\bigsqcup_{\Omega\in U}\mathcal{B}(\Omega),
\end{equation}
then the targeted continuous mapping is
\begin{equation}
\begin{split}
    \bar{X}&:=X^k\times Y^l\\
    \cup& \\
    \mathcal{H}: K&\longrightarrow  X^m,\\
    (f_{\Omega}^i,g_{\partial\Omega}^j)&\longmapsto (u_\Omega^1,...,u_\Omega^m)
\end{split}
\end{equation}
where $(f_{\Omega}^i,g_{\partial\Omega}^j)$ means $(f_{\Omega}^1,...,f_{\Omega}^k,g_{\partial\Omega}^1,...,g_{\partial\Omega}^l)$ for convenience. Provided that $X$ satisfies Assumption \ref{ass:metric}-\ref{ass:projection} with Definition \ref{def:X_metric} and discretization $\{\phi_n^X\}$, the metric for $Y$ can be similarly defined as
\begin{equation}
    d_Y(g_{\partial\Omega_1},g_{\partial\Omega_2}):=d_{U}(\Omega_1,\Omega_2)+\norm{g_{\partial\Omega_1}\circ\mathcal{D}[\Omega_1]|_{\partial\Omega_0}-g_{\partial\Omega_2}\circ\mathcal{D}[\Omega_2]|_{\partial\Omega_0}}_{L^2(\partial\Omega_0)},
\end{equation}
and we further assume that $Y$ satisfies Assumption \ref{ass:projection} with discretization $\{\phi_n^Y\}$. Consequently the metric for $\bar{X}$ can be given as
\begin{equation}
    d_{\bar{X}}((f_{\Omega_1}^i,g_{\partial\Omega_1}^j),(f_{\Omega_2}^i,g_{\partial\Omega_2}^j)):=\sum_{i=1}^kd_X(f_{\Omega_1}^i,f_{\Omega_2}^i)+\sum_{j=1}^ld_Y(g_{\partial\Omega_1}^j,g_{\partial\Omega_2}^j),
\end{equation}
together with the discretization
\begin{equation}
    \phi_n^{\bar{X}}((f_{\Omega}^i,g_{\partial\Omega}^j)):=(\phi_n^X(f_{\Omega}^1),...,\phi_n^X(f_{\Omega}^k),\phi_n^Y(g_{\partial\Omega}^1),...,\phi_n^Y(g_{\partial\Omega}^l)).
\end{equation}
Finally we derive a metric-to-Banach mapping
\begin{equation}
\begin{split}
    \bar{X}&\\
    \cup& \\
    \hat{\mathcal{H}}: K&\longrightarrow  \mathcal{B}(\Omega_0)^m\subset L^2(\Omega_0,\R^m),\quad\quad ({\rm D2D})\\
    (f_{\Omega}^i,g_{\partial\Omega}^j)&\longmapsto (u_\Omega^1\circ \mathcal{D}[\Omega],...,u_\Omega^m\circ \mathcal{D}[\Omega])
\end{split}
\end{equation}
or
\begin{equation}
\begin{split}
    \bar{X}&\\
    \cup& \\
    \hat{\mathcal{H}}: K&\longrightarrow  L^2(V,\R^m).\quad\quad ({\rm D2E})
\end{split}
\end{equation}
Such mapping can be learned by a neural network as in the single-input\&output case.

\section{Choices of neural operators}\label{sec:choices_no}

\textbf{MIONet.} D2D subframework with MIONet \cite{jin2022mionet} applied as the neural operator has been preliminarily studied in the previous version of this work \cite{xiao2024learning} as well as \cite{yin2024dimon}. In this case, $u(\alpha_1,...,\alpha_p):=\sum_{i=1}^p\alpha_iu_i$ is learned as the trunk net of MIONet, and the input signals of $\Omega$ and $f_\Omega\circ\mathcal{D}[\Omega]$ are separately handled by two different branch nets. The corresponding approximation theory is as follows.
\begin{theorem}[approximation theory for MIONet on metric space]\label{thm:approximation_MIONet}
    Let $X_{i}$ be metric spaces and $Y$ be a Banach space, assume that $X_{i}$ satisfies Assumption \ref{ass:projection} with the projection mapping $P_q^i=\psi_q^i\circ\phi_q^i$ and $\kappa_i(q)$, $K_{i}$ is a compact set in $X_{i}$. Suppose that
    \begin{equation}
    \mathcal{G}: K_{1}\times \cdots \times K_{n} \rightarrow Y   
    \end{equation}
    is a continuous mapping, then for any $\epsilon>0$, there exist positive integers $p_{i}, q_{i}$, continuous functions $g^{i}_{j}\in C(\mathbb{R}^{q_{i}})$ and $u_{j} \in Y$ such that 
    \begin{equation}
    \sup_{v_{i}\in K_{i}} \norm{\mathcal{G}(v_{1},\cdots,v_{n}) - \sum_{j = 1}^{p}g_{j}^{1}(\phi_{q_{1}}^{1}(v_{1}))\cdots g_{j}^{n}(\phi_{q_{n}}^{n}(v_{n}))\cdot u_{j}}_Y< \epsilon.
    \end{equation}
\end{theorem}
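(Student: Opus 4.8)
The plan is to follow the same route as the single-input Theorem~\ref{thm:app_metric_to_banach} (proved in Appendix~\ref{sec:proof_app_metric_to_banach}), inserting one extra layer: a factorization of the $C$-factor over the $n$ input spaces. The starting point is the iterated injective tensor product identity for continuous functions on compact sets,
\begin{equation}
C(K_1\times\cdots\times K_n,Y)=C(K_1)\itp\cdots\itp C(K_n)\itp Y,
\end{equation}
which holds because each $K_i$ is compact and the injective tensor product of $C$-spaces of compact sets reproduces the $C$-space of the product. Finite-rank elements are dense in the completed injective tensor product, so for any $\epsilon>0$ there exist $p\in\mathbb{N}^*$, functions $g_j^i\in C(K_i)$ and vectors $u_j\in Y$ with
\begin{equation}
\sup_{v_i\in K_i}\norm{\mathcal{G}(v_1,\dots,v_n)-\sum_{j=1}^p g_j^1(v_1)\cdots g_j^n(v_n)\cdot u_j}_Y<\frac{\epsilon}{2}.
\end{equation}

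Next I would extend each $g_j^i$ from the compact set $K_i$ to a continuous function on all of $X_i$ by Dugundji's theorem, and assemble
\begin{equation}
\Phi(v_1,\dots,v_n):=\sum_{j=1}^p g_j^1(v_1)\cdots g_j^n(v_n)\cdot u_j\in C(X_1\times\cdots\times X_n,Y).
\end{equation}
The discretization step then replaces each argument $v_i$ by $P_{q_i}^i(v_i)=\psi_{q_i}^i(\phi_{q_i}^i(v_i))$. By Assumption~\ref{ass:projection} applied to each $X_i$, one has $\sup_{v_i\in K_i}d_{X_i}(v_i,P_{q_i}^i(v_i))\to0$ as $q_i\to\infty$, so the tuple $(P_{q_1}^1(v_1),\dots,P_{q_n}^n(v_n))$ converges to $(v_1,\dots,v_n)$ uniformly on $K_1\times\cdots\times K_n$ in the product metric. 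Since $\Phi$ is continuous and $\prod_i K_i$ is compact, a standard subsequence/compactness argument yields that $\Phi\circ(P_{q_1}^1,\dots,P_{q_n}^n)\to\Phi$ uniformly on $\prod_i K_i$, so choosing the $q_i$ large makes this error below $\epsilon/2$. Finally, replacing $g_j^i$ by $g_j^i\circ\psi_{q_i}^i\in C(\R^{\kappa_i(q_i)})$ (the space written $C(\R^{q_i})$ in the statement) and relabeling produces exactly the asserted product form, the two halves combining to less than $\epsilon$.

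The main obstacle is the first step, namely the iterated tensor factorization and the ensuing product-form density: the single-input proof only needs $C(K)\itp Y$, whereas here the $C$-factor itself must be split into a product over the $n$ spaces. This is classical in Grothendieck's theory of the injective tensor product of $C$-spaces, but it is the conceptual heart that upgrades the single-factor statement to the genuinely multilinear MIONet structure. A secondary point, handled by the compactness argument above, is that the discretization replacement must be controlled \emph{uniformly} across the finite sum of products; equivalently, one may use a telescoping estimate that swaps one factor $g_j^i(P_{q_i}^i(\cdot))$ for $g_j^i(\cdot)$ at a time, and since each $g_j^i$ is bounded on $K_i$ and $p$ is fixed, the accumulated error stays controlled, so no new difficulty arises there.
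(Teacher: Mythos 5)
Your proposal is correct and follows essentially the same route as the paper, which proves this theorem by invoking the iterated injective tensor product identity $C(K_1\times\cdots\times K_n,Y)=C(K_1)\itp\cdots\itp C(K_n)\itp Y$ and then repeating the single-input argument (finite-rank density, Dugundji extension, and the discretization Assumption~\ref{ass:projection}). You merely spell out in more detail the uniform control of the projection error across the finite sum of products, which the paper leaves implicit.
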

\begin{proof}
    This proof is not significantly different from the proof of Theorem \ref{thm:app_metric_to_banach}, by considering the injective tensor product
\begin{equation}
    C(K_1\times K_2\times\cdots\times K_n,Y)= C(K_1)\itp C(K_2)\itp\cdots\itp C(K_n)\itp Y.
\end{equation}
    Actually Theorem \ref{thm:app_metric_to_banach} just regards $K_1\times K_2\times\cdots\times K_n$ as a whole $K$.
\end{proof}
When using MIONet, the metric-to-Banach mapping $\hat{\mathcal{H}}_\theta$ is expressed as
\begin{equation}
    \hat{\mathcal{H}}_\theta(f_\Omega)=\sum_{j = 1}^{p}g_{j}^{1}\left(\phi_{q_{1}}^{1}(\Omega);\theta\right)\cdot g_{j}^{2}\left(\phi_{q_{2}}^{2}(f_\Omega\circ\mathcal{D}[\Omega]);\theta\right)\cdot u_{j}(\cdot;\theta),
\end{equation}
where $(g_1^1,...,g_p^1):\R^{\kappa_1(q_1)}\to\R^p$, $(g_1^2,...,g_p^2):\R^{\kappa_2(q_2)}\to\R^p$, $(u_1,...,u_p):\R^{d}\to\R^p$ are modeled by neural networks such as MLP, CNN and so on. It is noticed that, D2D\&D2E based on MIONet still preserve the linearity on $f_\Omega$, i.e.
\begin{equation}
\begin{split}
    &\hat{\mathcal{H}}_\theta(\alpha f_\Omega^1+\beta f_\Omega^2)\circ\mathcal{D}[\Omega]^{-1}\\=&\alpha\hat{\mathcal{H}}_\theta(f_\Omega^1)\circ\mathcal{D}[\Omega]^{-1}+\beta\hat{\mathcal{H}}_\theta(f_\Omega^2)\circ\mathcal{D}[\Omega]^{-1},\ \forall f_\Omega^1,f_\Omega^2\in\mathcal{B}(\Omega),\ \forall\alpha,\beta\in\R,\quad\quad ({\rm D2D})
\end{split}
\end{equation}
and
\begin{equation}
\begin{split}
    &\hat{\mathcal{H}}_\theta(\alpha f_\Omega^1+\beta f_\Omega^2)|_\Omega\\=&\alpha\hat{\mathcal{H}}_\theta(f_\Omega^1)|_\Omega+\beta\hat{\mathcal{H}}_\theta(f_\Omega^2)|_\Omega,\ \forall f_\Omega^1,f_\Omega^2\in\mathcal{B}(\Omega),\ \forall\alpha,\beta\in\R,\quad\quad ({\rm D2E})
\end{split}
\end{equation}
as long as $g_j^2$ are set to linear. The linearity-preserving property indicates that this method is compatible with the hybrid iterative method \cite{hu2024hybrid}. The multiple-input\&output case can be similarly derived.

\textbf{FNO.} D2D subframework with FNO \cite{li2020fourier} applied as the neural operator has been studied in \cite{li2022fourier}. In this case, FNO maps between functions on rectangular domains, consequently the standard domain $\Omega_0$ is also required to be rectangular. Different from MIONet, FNO adopts a series of fixed $u_1,...,u_p$, which can be regarded as the nodal functions of linear finite elements on standard uniform grids, and also set $u(\alpha_1,...,\alpha_p):=\sum_{i=1}^p\alpha_iu_i$. Due to the restriction of FNO on the latent domain, as well as FNO specializes in dealing with smooth functions rectangular distributed, there may be some singular points leading to bad results. For example, the center of the disc could be singular for FNO, hence FNO performs well on strip-like regions, but worse on disc-like regions.

\section{Proof of Lemma \ref{lem:discretization_for_U}}\label{sec:proof_discretization_for_U}

For the sake of symbol simplicity, we temporarily use $P_n$ instead of $P_n^1$ in this proof. Denote the centroid and the measure of $\Omega\subset \R^d$ as $c(\Omega)$ and $m(\Omega)$, respectively. For $\Omega\in K$, let $b_\Omega$ and $b_{P_n(\Omega)}^{c(\Omega)}$ be the boundary functions of $\Omega$ and $P_n(\Omega)$ (both centered at $c(\Omega)$) respectively. It is noticed that the domains in $K$ are uniformly bounded, we have
\begin{equation}
\begin{split}
    &m((\Omega\backslash P_n(\Omega))\cup(P_n(\Omega)\backslash\Omega))\\ =&\int_{(\Omega\backslash P_n(\Omega))\cup(P_n(\Omega)\backslash\Omega)}dx\\ =&\int_{S}\frac{1}{d}\left(\max\left(b_\Omega,b_{P_n(\Omega)}^{c(\Omega)}\right)^d-\min\left(b_\Omega,b_{P_n(\Omega)}^{c(\Omega)}\right)^d\right)dm_s \\ 
    \leq&C\cdot\sup_{e\in S}|b_\Omega(e)-b_{P_n(\Omega)}^{c(\Omega)}(e)| \\
    \leq&C\cdot L\cdot M_n,
\end{split}
\end{equation}
where $m_s$ is the spherical measure on $S$, $C>0$ is a constant independent of $\Omega$ and $n$, $L$ is an upper bound of the Lipschitz constants of domains in $U$, and $M_n$ satisfies Eq. \eqref{eq:lim_Mn}. It is not difficult to find that the measure of $\Omega\in K$ has a positive lower bound, denoted by $m_0>0$ (choose a $\delta$ small enough for each $\Omega$ such that $m(\Omega')>\frac{1}{2}m(\Omega)$ for any $\Omega'\in B(\Omega,\delta)$, then consider the open covering), as well as an upper bound $m_1>m_0$. Hence
\begin{equation}
    m(P_n(\Omega))=m(\Omega\cup P_n(\Omega))-m(\Omega\backslash P_n(\Omega))\geq m_0-CLM_n \geq \frac{1}{2}m_0
\end{equation}
for $n$ large enough. Let $M$ be an upper bound of $|x|$ for $x\in\Omega\in K$, then
\begin{equation}
\begin{split}
d_{E}(c(\Omega),c(P_n(\Omega)))=&\norm{\frac{\int_\Omega xdx}{m(\Omega)}-\frac{\int_{P_n(\Omega)}xdx}{m(P_n(\Omega))}}_2 \\
=&\norm{\frac{(m(P_n(\Omega))-m(\Omega))\int_\Omega xdx+m(\Omega)(\int_\Omega xdx-\int_{P_n(\Omega)} xdx)}{m(\Omega)m(P_n(\Omega))}}_2 \\
\leq&\frac{4CLMm_1}{m_0^2}\cdot M_n,
\end{split}
\end{equation}
for $n$ large enough. It immediately leads to
\begin{equation}\label{eq:centroid}
\lim_{n\to\infty}\sup_{\Omega\in K}d_{E}(c(\Omega),c(P_n(\Omega)))=0.
\end{equation}

Denote by $b_{P_n(\Omega)}$ the boundary function of $P_n(\Omega)$ (centered at $c(P_n(\Omega))$ compared to $b_{P_n(\Omega)}^{c(\Omega)}$ which is centered at $c(\Omega)$). Let $Q$ be the unique intersection point in
\begin{equation}
    \left\{c(\Omega)+t\left(b_{P_n(\Omega)}(e)\cdot e+c(P_n(\Omega))-c(\Omega)\right)|t\geq0\right\}\cap\partial\Omega,
\end{equation}
then
\begin{equation}
\begin{split}
|b_{\Omega}(e)-b_{P_n(\Omega)}(e)|=&\norm{b_{\Omega}(e)\cdot e-b_{P_n(\Omega)}(e)\cdot e}_2 \\
=&\big\|b_{\Omega}(e)\cdot e+c(\Omega)-Q \\
&+Q-(b_{P_n(\Omega)}(e)\cdot e+c(P_n(\Omega))) \\
&+c(P_n(\Omega))-c(\Omega)\big\|_2 \\
\leq&\norm{b_{\Omega}(e)\cdot e+c(\Omega)-Q}_2+LM_n+d_E(c(\Omega),c(P_n(\Omega))).
\end{split}
\end{equation}
Denote the angle between $Q-c(\Omega)$ and $b_{\Omega}(e)\cdot e$ as $\alpha$, then $0\leq\alpha\leq\pi$ and
\begin{equation}
\begin{split}
\cos(\alpha)=&\frac{b_{P_n(\Omega)}(e)^2+\norm{b_{P_n(\Omega)}(e)\cdot e+c(P_n(\Omega))-c(\Omega)}_2^2-d_E(c(\Omega),c(P_n(\Omega)))^2}{2b_{P_n(\Omega)}(e)\norm{b_{P_n(\Omega)}(e)\cdot e+c(P_n(\Omega))-c(\Omega)}_2}\\
\geq&1-C_1d_E(c(\Omega),c(P_n(\Omega)))^2,
\end{split}
\end{equation}
where $C_1>0$ is a constant. The last inequality is due to $b_{P_n(\Omega)}(e)$ has a lower bound. Thus we have $\lim_{n\to\infty}\alpha=0$. Consequently,
\begin{equation}
\begin{split}
\norm{b_{\Omega}(e)\cdot e+c(\Omega)-Q}_2^2=&b_{\Omega}(e)^2+\norm{Q-c(\Omega)}_2^2-2b_{\Omega}(e)\norm{Q-c(\Omega)}_2\cos(\alpha) \\
=&(b_{\Omega}(e)-\norm{Q-c(\Omega)}_2)^2+2b_{\Omega}(e)\norm{Q-c(\Omega)}_2(1-\cos(\alpha))\\
\leq&L^2M_n^2+C_2d_E(c(\Omega),c(P_n(\Omega)))^2,
\end{split}
\end{equation}
for a constant $C_2>0$. Subsequently, we obtain
\begin{equation}
\begin{split}
|b_{\Omega}(e)-b_{P_n(\Omega)}(e)|\leq&\sqrt{L^2M_n^2+C_2d_E(c(\Omega),c(P_n(\Omega)))^2}+LM_n+d_E(C(\Omega),C(P_n(\Omega))),
\end{split}
\end{equation}
and then
\begin{equation}\label{eq:boundary}
\lim_{n\to\infty}\sup_{\Omega\in K}\norm{b_{\Omega}-b_{P_n(\Omega)}}_{C(S)}=0.
\end{equation}
The equations \eqref{eq:centroid} and \eqref{eq:boundary} lead to the final result. \qed

\section{Proof of Lemma \ref{lem:discretization_for_B}}\label{sec:proof_discretization_for_B}

For the sake of symbol simplicity, we temporarily use $P_n$ instead of $P_n^2$ in this proof. We first fix a $f\in L^2(\Omega_0)$, for any $\epsilon>0$, there exists a smooth function with a compact support in $\Omega_0$, denoted by $f_c\in C_c^\infty(\Omega_0)$, such that $\norm{f-f_c}_{L^2(\Omega_0)}<\epsilon$. It is obvious to see that
\begin{equation}
    \lim_{n\to\infty}\norm{f_c-P_n(f_c)}_{L^2(\Omega_0)}=0,
\end{equation}
since the derivatives of $f_c$ are bounded and $\lim_{n\to\infty}M_n=0$. Subsequently, 
\begin{equation}
    \varlimsup_{n\to\infty}\norm{f-P_n(f_c)}_{L^2(\Omega_0)}\leq\varlimsup_{n\to\infty}\norm{f-f_c}_{L^2(\Omega_0)}+\varlimsup_{n\to\infty}\norm{f_c-P_n(f_c)}_{L^2(\Omega_0)}<\epsilon.
\end{equation}
Note that
\begin{equation}
\begin{split}
    \norm{f-P_n(f)}_{L^2(\Omega_0)}^2=&\sum_{i=1}^n\int_{\Omega_i^n}\left(f(x)-\frac{\int_{\Omega_i^n}f(y)dy}{m(\Omega_i^n)}\right)^2dx \\
    \leq&\sum_{i=1}^n\int_{\Omega_i^n}\left(f(x)-\frac{\int_{\Omega_i^n}f(y)dy}{m(\Omega_i^n)}\right)^2+\left(\frac{\int_{\Omega_i^n}g(y)-f(y)dy}{m(\Omega_i^n)}\right)^2dx \\
    =&\sum_{i=1}^n\int_{\Omega_i^n}\left(f(x)-\frac{\int_{\Omega_i^n}g(y)dy}{m(\Omega_i^n)}\right)^2dx \\
    =&\norm{f-P_n(g)}_{L^2(\Omega_0)}^2,\quad\forall g\in L^2(\Omega_0),
\end{split}
\end{equation}
which means $P_n(f)$ is the projection of $f$ on the $n$-dimensional subspace and it is closest to $f$ in $L^2(\Omega_0)$ norm. Therefore
\begin{equation}
    \varlimsup_{n\to\infty}\norm{f-P_n(f)}_{L^2(\Omega_0)}\leq\varlimsup_{n\to\infty}\norm{f-P_n(f_c)}_{L^2(\Omega_0)}<\epsilon.
\end{equation}
Let $\epsilon\to 0$, we obtain $\lim_{n\to\infty}\norm{f-P_n(f)}_{L^2(\Omega_0)}=0$.

Next we show the uniform convergence on $K$. Given any $\epsilon>0$, due to the open covering $\cup_{f\in K}B(f,\epsilon)\supset K$, there exist $\{f_1,...,f_k\}$ such that $\cup_{i=1}^kB(f_i,\epsilon)\supset K$. Then for any $f\in K$, we can find a $f_l$ satisfying $\norm{f_l-f}_{L^2(\Omega_0)}<\epsilon$, consequently
\begin{equation}
    \norm{f-P_n(f)}_{L^2(\Omega_0)}\leq\norm{f-P_n(f_l)}_{L^2(\Omega_0)}\leq\epsilon+\max_{1\leq i\leq k}\norm{f_i-P_n(f_i)}_{L^2(\Omega_0)},
\end{equation}
which leads to
\begin{equation}
    \varlimsup_{n\to\infty}\sup_{f\in K}\norm{f-P_n(f)}_{L^2(\Omega_0)}\leq\epsilon+\varlimsup_{n\to\infty}\max_{1\leq i\leq k}\norm{f_i-P_n(f_i)}_{L^2(\Omega_0)}=\epsilon.
\end{equation}
Let $\epsilon\to 0$, we obtain $\lim_{n\to\infty}\sup_{f\in K}\norm{f-P_n(f)}_{L^2(\Omega_0)}=0$. \qed

\section{Proof of Theorem \ref{thm:continuity_poisson}}\label{sec:proof_continuity_poisson}

The main tool for this proof is the Green's function \cite{lax2005existence} as well as its uniform convergence on planar domains \cite{kalmykov2019uniform}. We first choose a domain $V=[-R,R]^2\subset\R^2$ large enough so that all the $\Omega\in K_1:=\pi_1(\sigma(K))$ are covered by $V$. Every function involved in this proof is considered as a function defined on $V$, which is extended by 0 outside of the domain. Let $f_{\Omega_n}\subset K$ be a series of functions converging to $f_\Omega\in K$ in $d_X$, then $\Omega_n\rightarrow\Omega$ in $d_U$, and $f_{\Omega_n}\circ\mathcal{D}[\Omega_n]\rightarrow f_\Omega\circ\mathcal{D}[\Omega]$ in $L^2(\Omega_0)$. Suppose that $G_n(x,y)$ and $G(x,y)$ are the Green's functions for $\Omega_n$ and $\Omega$ respectively. Next we present the proof through several steps.

    1. We will show $f_{\Omega_n}\to f_\Omega$ in $L^2(V)$. For any compact $J\subset \Omega\backslash\{c_\Omega\}$, it can be readily checked that $\mathcal{D}[\Omega_n]^{-1}(y)\to\mathcal{D}[\Omega]^{-1}(y)$ uniformly on $J$, subsequently
    \begin{equation}
    \begin{split}
        &\norm{f_{\Omega_n}-f_{\Omega_n}\circ\mathcal{D}[\Omega_n]\circ\mathcal{D}[\Omega]^{-1}}_{L^2(J)}^2\\ =&\int_J\left|f_{\Omega_n}\circ\mathcal{D}[\Omega_n](\mathcal{D}[\Omega_n]^{-1}(y))-f_{\Omega_n}\circ\mathcal{D}[\Omega_n](\mathcal{D}[\Omega]^{-1}(y))\right|^2dy\to 0{\rm\ \ as\ \ }n\to\infty,
    \end{split}
    \end{equation}
    since $\{f_{\Omega_n}\circ\mathcal{D}[\Omega_n]\}\subset K_2:=\pi_2(\sigma(K))$ is uniformly equicontinuous by Arzelà–Ascoli theorem. The arbitrariness of $J$ and the boundedness of $\{f_{\Omega_n}\}$ leads to \begin{equation}\label{eq:convergence_for_f_1}
        \norm{f_{\Omega_n}-f_{\Omega_n}\circ\mathcal{D}[\Omega_n]\circ\mathcal{D}[\Omega]^{-1}}_{L^2(V)}\to 0{\rm\ \ as\ \ }n\to\infty.
    \end{equation}
    Additionally,
    \begin{equation}
    \begin{split}
        &\norm{f_{\Omega_n}\circ\mathcal{D}[\Omega_n]\circ\mathcal{D}[\Omega]^{-1}-f_\Omega}_{L^2(\Omega)}^2\\ =&\int_{\Omega}\left|f_{\Omega_n}\circ\mathcal{D}[\Omega_n](\mathcal{D}[\Omega]^{-1}(y))-f_{\Omega}\circ\mathcal{D}[\Omega](\mathcal{D}[\Omega]^{-1}(y))\right|^2dy\\
        =&\int_{0}^{2\pi}\int_{0}^{1}\left|f_{\Omega_n}\circ\mathcal{D}[\Omega_n](r\cos(\theta),r\sin(\theta))-f_{\Omega}\circ\mathcal{D}[\Omega](r\cos(\theta),r\sin(\theta))\right|^2rb_\Omega^2(\theta)drd\theta \\
        \leq&C\int_{0}^{2\pi}\int_{0}^{1}\left|f_{\Omega_n}\circ\mathcal{D}[\Omega_n](r\cos(\theta),r\sin(\theta))-f_{\Omega}\circ\mathcal{D}[\Omega](r\cos(\theta),r\sin(\theta))\right|^2rdrd\theta \\
        =&C\norm{f_{\Omega_n}\circ\mathcal{D}[\Omega_n]-f_\Omega\circ\mathcal{D}[\Omega]}_{L^2(\Omega_0)}^2\to 0{\rm\ \ as\ \ }n\to\infty,
    \end{split}
    \end{equation}
    therefore
    \begin{equation}\label{eq:convergence_for_f_2}
        \norm{f_{\Omega_n}\circ\mathcal{D}[\Omega_n]\circ\mathcal{D}[\Omega]^{-1}-f_\Omega}_{L^2(V)}\to 0{\rm\ \ as\ \ }n\to\infty.
    \end{equation}
    By Eq. \eqref{eq:convergence_for_f_1} and \eqref{eq:convergence_for_f_2}, we derive that
    $f_{\Omega_n}\to f_\Omega$ in $L^2(V)$.

    2. We will show $u_{\Omega_n}(x)\to u_\Omega(x)$ for a.e. $x\in V$. Theorem 3.1 in \cite{kalmykov2019uniform} tells that $G_n(x,\cdot)$ uniformly converges to $G(x,\cdot)$ on $V\backslash\{x\}$, thus $G_n(x,\cdot)\to G(x,\cdot)$ in $L^2(V)$ for every $x\in \Omega$. By the representation formula using Green's function,
    \begin{equation}
        u_{\Omega_n}(x)=\int_{V}f_{\Omega_n}(y)G_n(x,y)dy\to\int_{V}f_{\Omega}(y)G(x,y)dy=u_\Omega(x){\rm\ \ as\ \ }n\to\infty,\quad\forall x\in\Omega.
    \end{equation}
    Moreover, $u_{\Omega_n}(x)$ converges to $0=u_\Omega(x)$ for every $x\in V\backslash\overline{\Omega}$, so that $u_{\Omega_n}(x)\to u_\Omega(x)$ for a.e. $x\in V$.

    3. We will show $u_{\Omega_n}\to u_\Omega$ in $L^2(V)$. The Green's functions are written as
    \begin{equation}
        G_n(x,y)=\Phi(y-x)+\phi_n^x(y),\quad G(x,y)=\Phi(y-x)+\phi^x(y),
    \end{equation}
    where $\Phi(y)=-\frac{1}{2\pi}\ln(|y|)$ is the fundamental solution, $\phi_n^x(y)$ and $\phi^x(y)$ are the corrector functions solving
\begin{equation}
	\begin{cases}
		-\Delta \phi_n^x=0 &\mbox{in} \; \Omega_n
		\\
		\phi_n^x = \Phi(y-x) &\mbox{on} \; \partial\Omega_n,
	\end{cases}\quad\quad 
    \begin{cases}
		-\Delta \phi^x=0 &\mbox{in} \; \Omega
		\\
		\phi^x = \Phi(y-x) &\mbox{on} \; \partial\Omega.
	\end{cases}
\end{equation}
The strong maximum principle indicates that the harmonic function $\phi_n^x$ attains its maximum and minimum on the boundary, therefore
\begin{equation}
\begin{split}
    |u_{\Omega_n}(x)|=&\left|\int_{\Omega_n}f_{\Omega_n}(y)G_n(x,y)dy\right|\\ 
    \leq&\int_{\Omega_n}M_f(|\Phi(y-x)|+|\phi_n^x(y)|)dy\\
    \leq&C_1+C_2\cdot|\ln({\rm dist}(x,\partial\Omega_n))|=:g_n(x),\quad x\in \Omega_n,
\end{split}
\end{equation}
where $M_f$ is an upper bound of $|f_{\Omega_n}|$, $C_1,C_2>0$ are two constants. Denote a neighborhood of $\partial\Omega_n$ by $E_n^\epsilon:=\{x\in \Omega_n|{\rm dist}(x,\partial\Omega_n)<\epsilon\}$, then 
\begin{equation}
\begin{split}
    \int_{\Omega_n}|\ln({\rm dist}(x,\partial\Omega_n))|^2dx=&\int_{E_n^\epsilon}|\ln({\rm dist}(x,\partial\Omega_n))|^2dx+\int_{\Omega_n\backslash E_n^\epsilon}|\ln({\rm dist}(x,\partial\Omega_n))|^2dx\\
    =&\int_{0}^{\epsilon}|\ln(r)|^2S_n(r)dr+\int_{\Omega_n\backslash E_n^\epsilon}|\ln({\rm dist}(x,\partial\Omega_n))|^2dx\\
    <&+\infty,
\end{split}
\end{equation}
where $S_n(r)$ denoting the length of curve $\{x\in\Omega_n|{\rm dist}(x,\partial\Omega_n)=r\}$. Consequently, $g_n\in L^2(V)$ by zero-expansion. Similarly, let $g(x):=C_1+C_2\cdot|\ln({\rm dist}(x,\partial\Omega))|$ for $x\in\Omega$, then $g\in L^2(V)$ by zero-expansion. Consider the neighborhood of $\partial\Omega$ denoted by $E^\epsilon:=\{x\in V|{\rm dist}(x,\partial\Omega)<\epsilon\}$, then
\begin{equation}
\begin{split}
    \norm{g_n-g}_{L^2(V)}^2=&\int_{V}(g_n(x)-g(x))^2dx\\
    =&\int_{E^\epsilon}(g_n(x)-g(x))^2dx+\int_{V\backslash E^\epsilon}(g_n(x)-g(x))^2dx\\
    \leq&2\int_{E^\epsilon}g_n(x)^2dx+2\int_{E^\epsilon}g(x)^2dx+\int_{V\backslash E^\epsilon}(g_n(x)-g(x))^2dx \\
    \leq&2\int_{E_n^{2\epsilon}}g_n(x)^2dx+2\int_{E^\epsilon}g(x)^2dx+\int_{V\backslash E^\epsilon}(g_n(x)-g(x))^2dx\\
\end{split}
\end{equation}
for $n$ large enough. We can check that $g_n(x)$ converges to $g(x)$ uniformly on $V\backslash E^\epsilon$, therefore $\int_{V\backslash E^\epsilon}(g_n(x)-g(x))^2dx\to 0$ as $n\to\infty$. Note that
\begin{equation}
    \int_{E_n^{2\epsilon}}g_n(x)^2dx=\int_{0}^{2\epsilon}(C_1+C_2|\ln(r)|)^2S_n(r)dr\leq C_3(\epsilon\ln^2(2\epsilon)-\epsilon\ln(2\epsilon)+\epsilon),
\end{equation}
and also
\begin{equation}
    \int_{E^{\epsilon}}g(x)^2dx=\int_{0}^{\epsilon}(C_1+C_2|\ln(r)|)^2S(r)dr\leq C_3(\epsilon\ln^2(\epsilon)-\epsilon\ln(\epsilon)+\epsilon),
\end{equation}
subsequently
\begin{equation}
    \varlimsup_{n\to\infty}\norm{g_n-g}_{L^2(V)}^2\leq C_4\cdot(\epsilon\ln^2(\epsilon)-\epsilon\ln(\epsilon)+\epsilon).
\end{equation}
Let $\epsilon\to 0$ we obtain $g_n\to g$ in $L^2(V)$. Up to now we have pointed out that
\begin{itemize}
    \item $u_{\Omega_n}(x)\to u_\Omega(x)$ for a.e. $x\in V$,
    \item $|u_{\Omega_n}(x)|\leq g_n(x)$ for $x\in V$,
    \item $g_n\to g$ in $L^2(V)$,
\end{itemize}
which lead to $u_{\Omega_n}\to u_\Omega$ in $L^2(V)$ by Vitali convergence theorem.

4. It is noticed that $u_\Omega$ is uniformly continuous on $V$, and $\mathcal{D}[\Omega_n](x)\to\mathcal{D}[\Omega](x)$ uniformly on $\Omega_0$, so that $u_\Omega\circ\mathcal{D}[\Omega_n]\to u_\Omega\circ\mathcal{D}[\Omega]$ in $L^2(\Omega_0)$. Moreover,
\begin{equation}
\begin{split}
    &\norm{u_{\Omega_n}\circ\mathcal{D}[\Omega_n]-u_{\Omega}\circ\mathcal{D}[\Omega_n]}_{L^2(\Omega_0)}^2\\=&\int_{\Omega_0}\left|u_{\Omega_n}(\mathcal{D}[\Omega_n](x))-u_{\Omega}(\mathcal{D}[\Omega_n](x))\right|^2dx\\
    =&\int_{0}^{2\pi}\int_0^{b_{\Omega_n}(\theta)}\left|u_{\Omega_n}(c_{\Omega_n}+r\cdot(\cos(\theta),\sin(\theta)))-u_{\Omega}(c_{\Omega_n}+r\cdot(\cos(\theta),\sin(\theta)))\right|^2b_{\Omega_n}^{-2}(\theta)rdrd\theta\\
    \leq&C_5\int_{0}^{2\pi}\int_0^{b_{\Omega_n}(\theta)}\left|u_{\Omega_n}(c_{\Omega_n}+r\cdot(\cos(\theta),\sin(\theta)))-u_{\Omega}(c_{\Omega_n}+r\cdot(\cos(\theta),\sin(\theta)))\right|^2rdrd\theta\\
    =&C_5\norm{u_{\Omega_n}-u_\Omega}_{L^2(\Omega_n)}^2\leq C_5\norm{u_{\Omega_n}-u_\Omega}_{L^2(V)}^2,
\end{split}
\end{equation}
therefore $u_{\Omega_n}\circ\mathcal{D}[\Omega_n]\to u_{\Omega}\circ\mathcal{D}[\Omega_n]$ in $L^2(\Omega_0)$. Finally, we derive that $u_{\Omega_n}\circ\mathcal{D}[\Omega_n]\to u_{\Omega}\circ\mathcal{D}[\Omega]$ in $L^2(\Omega_0)$, the proof is completed. \qed

\bibliographystyle{abbrv}
\bibliography{references}

\end{document}